\renewcommand*{\marginpar}[1]{} % Zum marginpar ausblenden
\newcommand{\N}{\mathbb{N}}
\newcommand{\Z}{\mathbb{Z}}
\newcommand{\R}{\mathbb{R}}
\newcommand{\C}{\mathbb{C}}
\newcommand{\E}{\mathbb{E}}
\newcommand{\HI}{H^\infty}
\newcommand{\Hor}{\mathcal{H}}
\newcommand{\Ha}{\Hor^\alpha}
\DeclareMathOperator{\supp}{supp}
\DeclareMathOperator{\Prob}{Prob}
\DeclareMathOperator{\diam}{diam}
\DeclareMathOperator{\dive}{div}
\let\Re=\relax \DeclareMathOperator{\Re}{Re}
\let\Im=\relax \DeclareMathOperator{\Im}{Im}
\newtheorem{thmalt}{Theorem}[section]
\theoremstyle{definition}
\newtheorem{rem}[thmalt]{Remark}
\newtheorem{thm}[thmalt]{Theorem}
\newtheorem{cor}[thmalt]{Corollary}
\newtheorem{lem}[thmalt]{Lemma}
\newtheorem{prop}[thmalt]{Proposition}
\numberwithin{equation}{section}
\title[H\"ormander Functional Calculus for Poisson Estimates]% end with percent
 {H\"ormander Functional Calculus for Poisson Estimates} % This is the full title of the paper
\author[Ch. Kriegler]{Christoph Kriegler}
\address{
Laboratoire de Math\'ematiques (CNRS UMR 6620)\\
Universit\'e Blaise-Pascal (Clermont-Ferrand 2)\\
Campus des C\'ezeaux\\
63177 Aubi\`ere Cedex\\
France
}
\email{christoph.kriegler@math.univ-bpclermont.fr}
\date{\today}
\subjclass{42A45, 47A60, 47D03}
\keywords{Functional calculus, H\"ormander Type Spectral Multiplier Theorems, Spaces of homogeneous type, Poisson Semigroup}
\begin{document}

\begin{abstract}
The aim of the article is to show a H\"ormander spectral multiplier theorem for an operator $A$ whose kernel of the semigroup $\exp(-zA)$ satisfies certain Poisson estimates for complex times $z.$
Here $\exp(-zA)$ acts on $L^p(\Omega),\,1 < p < \infty,$ where $\Omega$ is a space of homogeneous type with the additional conditions that the volume of balls grows polynomially of exponent $d$ and the measure of annuli is controlled by the corresponding euclidean term.
In most of the known H\"ormander type theorems in the literature, Gaussian bounds and self-adjointness for the semigroup are needed,
whereas here the new feature is that the assumptions are the to some extent weaker Poisson bounds, and $\HI$ calculus in place of self-adjointness.
The order of derivation in our H\"ormander multiplier result is typically $\frac{d}{2},$ $d$ being the dimension of the space $\Omega.$
Moreover the functional calculus resulting from our H\"ormander theorem is shown to be $R$-bounded.
Finally, the result is applied to some examples.
\end{abstract}

\maketitle

\section{Introduction}\label{Sec 1 Intro}

Let $f$ be a bounded function on $(0,\infty)$ and $u(f)$ the operator on $L^p(\R^d)$ defined by $[u(f)g]\hat{\phantom{i}}(\xi) = f(|\xi|^2) \hat{g}(\xi).$
H\"ormander's theorem on Fourier multipliers \cite[Theorem 2.5]{Ho} asserts that $u(f) : L^p(\R^d) \to L^p(\R^d)$ is bounded for any $p \in (1,\infty)$ provided that for some integer $N$ strictly larger than $\frac{d}{2}$
\begin{equation}\label{Equ Intro Hor}
\sup_{R > 0} \int_{R/2}^{2R} \left| t^k f^{(k)}(t)\right|^2 \frac{dt}{t} < \infty \quad (k= 0,\ldots,N).
\end{equation}

This theorem has many generalisations to similar contexts, for example to elliptic and sub-elliptic differential operators $A,$
including sublaplacians on Lie groups of polynomial growth, Schr\"odinger operators and elliptic operators on Riemannian manifolds \cite{DuOS}:
Note first that the above $u(f)$ equals $f(-\Delta),$ the functional calculus of the self-adjoint positive operator $-\Delta.$
Now for a self-adjoint operator $A$, a H\"ormander theorem states that the operator $f(A)$ extends boundedly to $L^p(\Omega),\:1<p<\infty$ for any function $f$ satisfying \eqref{Equ Intro Hor} with suitable $N.$ 
In most of the proofs for a H\"ormander theorem in the literature, the assumption of so called Gaussian bounds plays a crucial role.
That means the following.
Suppose that $A$ acts on $L^p(\Omega),\,1<p<\infty,$ where $(\Omega,\mu,\rho)$ is a space of homogeneous type.
Then the semigroup $(\exp(-tA))_{t \geq 0}$ generated by $A$ has an integral kernel $k_t(x,y)$ such that
\begin{equation}\label{Equ Gaussian intro}
|k_t(x,y)| \leq C \mu(B(y,\sqrt{t}))^{-1} \exp \left(-c \frac{\rho(x,y)^2}{t} \right) \quad (t >0,\,x,y\in \Omega).
\end{equation}
This hypothesis includes many elliptic differential operators.
However there are operators such that the integral kernel of the semigroup satisfies only weaker estimates, see e.g. \cite{GiGr,MMMM,OtE}.
Establishing a H\"ormander theorem for these operators is the issue of the present article.
More precisely, let $\Omega$ be a space of homogeneous type with the additional properties that the volume of balls grows polynomially of exponent $d > 0$ and that the measure of annuli is controlled by (a constant times) the corresponding euclidean term, see \eqref{Equ add prop space 1} and \eqref{Equ add prop space 2} for a precise definition.
Let further $A$ act on $L^p(\Omega)$ such that $(\exp(-zA))_{\Re z > 0}$ has an integral kernel $k_z(x,y)$ such that
\begin{equation}\label{Equ Poisson estimate intro}
|k_z(x,y)| \leq C (\cos \arg z)^{-\beta} \frac{1}{\mu(B(x,|z|))} \frac{1}{|1 + \frac{\rho(x,y)^2}{z^2}|^{\frac{d+1}{2}}} \quad (\Re z > 0,\:x,y \in \Omega)
\end{equation}
holds for some $C,\beta \geq 0.$
If $\Omega = \R^d$ and $\beta = 0,$ the right hand side of this estimate is (a constant times) the absolute value of the complex Poisson kernel which obviously decays slower as $\rho(x,y) \to \infty$
than the Gaussian kernel above.
Under a further hypothesis on the homogeneous space $\Omega,$ see \eqref{Equ add prop space 1} and \eqref{Equ add prop space 2} below, and the presence of an $\HI$ calculus of $A$ on $L^2(\Omega),$ we obtain a H\"ormander theorem of the order $N > \frac{d}{2}+\beta$ for operators $A$ satisfying the above estimate.
The proof relies on the behaviour of the semigroup $\exp(-zA)$ generated by $A$ when the complex parameter $z$ approaches the imaginary axis.
Here simple norm estimates are not sufficient but $R$-bounds of the semigroup are needed.
Our method does not need self-adjointness of $A.$
This is new compared to most of the spectral multiplier results in the literature.
In particular, we give a non-self-adjoint example of a Lam\'e operator to which our main result applies.
Note also that Gaussian estimates as in \eqref{Equ Gaussian intro} and self-adjointness in general yield only a Mihlin calculus \cite[Theorem 7.23]{Ouha}, i.e. bounded spectral multipliers $f(A)$ for $f$ satisfying
\[ \max_{0 \leq k \leq N} |t^k f^{(k)}(t)| < \infty,\]
and $N > \frac{d}{2},$ whereas we show that complex Poisson estimates give a H\"ormander functional calculus, i.e.
$f(A)$ is bounded for $f$ satisfying \eqref{Equ Intro Hor} with $N > \frac{d}{2},$ if $\beta = 0$ in \eqref{Equ Poisson estimate intro}.
This yields better estimates for the Bochner-Riesz means $(1 - A/\lambda)^\nu_+$ on $L^p(\Omega),$ see the discussion in the introduction of \cite{DuOS}.
The difficulty in assumption \eqref{Equ Poisson estimate intro} is to show an estimate for complex times $z,$ something that for Gaussian estimates one gets somehow for free out of an estimate like \eqref{Equ Gaussian intro} and self-adjointness.

In Section \ref{Sec 2 Prelims} we will introduce the necessary background and cite a theorem which allows to pass from $R$-bounds on the semigroup to a H\"ormander functional calculus.
In Section \ref{Sec 3 Main Thm} we state and prove the main result, Theorem \ref{Thm Main} and Corollary \ref{Cor Ha calculus} of this article.
In Section \ref{Sec 4 Examples}, an application to a concrete operator is given, for which a H\"ormander theorem was previously unknown,
and two further examples entering our context are discussed.
Finally, in Section \ref{Sec Proof Lem Prelim}, two proofs of technical lemmas are annexed.

\section{Preliminaries}\label{Sec 2 Prelims}

In this section, we provide the necessary background for the Main Section \ref{Sec 3 Main Thm}.
Let $\omega \in (0,\pi).$
A densely defined and closed operator $A$ on $L^p(\Omega),\,1 < p < \infty,$ is called $\omega$-sectorial if $\sigma(A) \subset \overline{\Sigma_\omega}$ where $\Sigma_\omega = \{ z \in \C^* :\: | \arg z| < \omega\},$
and $\|\lambda (\lambda - A)^{-1}\| \leq C_\theta$ for any $\lambda \in \overline{\Sigma_\theta}^c$ and any $\theta \in (\omega,\pi).$
For an $\omega$-sectorial operator $A$ and a function $f \in \HI_0(\Sigma_\theta) = \{ g : \Sigma_\theta \to \C :\: g \text{ analytic and bounded},\:\exists\:C,\epsilon > 0:\:
|g(z)| \leq C \min(|z|^\epsilon,|z|^{-\epsilon})\}$ where $0 < \omega < \theta < \pi,$ one defines the operator $f(A)$ by
\[ f(A)x = \frac{1}{2\pi i} \int_\Gamma f(\lambda) (\lambda - A)^{-1}x d\lambda. \]
Here, $\Gamma$ is the boundary of $\Sigma_{\frac{\omega + \theta}{2}}$ oriented counterclockwise.
This definition coincides with the self-adjoint calculus if applicable.
If there is a constant $C > 0$ such that $\|f(A)\| \leq C \sup_{|\arg z| < \theta} |f(z)|$ for any $f \in \HI_0(\Sigma_\theta),$ then $A$ is said to have a bounded $\HI(\Sigma_\theta)$ calculus,
or just bounded $\HI$ calculus.
Let $\phi_0 \in C^\infty_c(\frac12,2)$ and for $n \in \Z$ put $\phi_n = \phi_0(2^{-n} \cdot).$
We can and do assume that $\sum_{n \in \Z} \phi_n(t) = 1$ for any $t > 0$ \cite[Lemma 6.1.7]{BeL}.
Now define 
\[\Ha = \{ f : [0,\infty) \to \C :\: \|f\|_{\Ha} = |f(0)| + \sup_{n \in \Z} \| (\phi_n f) \circ \exp \|_{W^\alpha_2(\R)} < \infty \},\]
where $W^\alpha_2(\R)$ is the usual Sobolev space.
For $\alpha > \frac12,$ the space $\Ha$ is a Banach algebra endowed with the norm $\|\cdot\|_{\Ha}.$
This class refines condition \eqref{Equ Intro Hor} in the sense that $f \in \Ha \Longrightarrow f$ satisfies \eqref{Equ Intro Hor}
for $\alpha > N$ and the converse holds for $\alpha < N.$
If $\|f(A)\| \leq C \|f\|_{\Ha}$ for any $f \in \bigcap_{\omega > 0} \HI_0(\Sigma_\omega) \cap \Ha,$ then there exists a bounded homomorphism $\Ha \to B(L^p(\Omega)),\: f \mapsto f(A),$
and $A$ is said to have a bounded $\Ha$ calculus.
If $A$ is moreover self-adjoint on $L^2(\Omega)$ then for $f \in \Ha \subseteq L^\infty(\R_+)$ for some $\alpha > \frac12,$
$f(A)$ is defined twice, but one can show that the definition from the $\HI$ calculus plus density in $\Ha$ and the definition from the self-adjoint spectral calculus coincide.
In particular, our notion of $\Ha$ calculus is the same as in most of the definitions in the literature.

Let $(\epsilon_n)_{n \in \N}$ be a sequence of independent random variables such that $\Prob(\epsilon_n = 1) = \Prob(\epsilon_n = -1) = \frac12,$ i.e. a sequence of independent Rademach\-er variables.
Let $X$ be a Banach space.
A subset $\tau \subset B(X)$ is called $R$-bounded if there exists a constant $C > 0$ such that for any choice of finite families $T_1,\ldots,T_n \in \tau$ and $x_1,\ldots,x_n \in X,$ one has
\[ \left( \E \left\| \sum_{k=1}^n \epsilon_k T_k x_k \right\|_X^2 \right)^{\frac12} \leq C \left( \E \left\| \sum_{k=1}^n \epsilon_k x_k \right\|_X^2 \right)^{\frac12}.\]
The least possible constant is denoted by $R(\tau),$ and $R(\tau) = \infty,$ if no such constant is admitted.
Any $R$-bounded set $\tau$ is norm bounded, i.e. $\sup_{T \in \tau} \|T\| \leq R(\tau),$ but the converse is false in general.
If $X = L^p,\: 1 \leq p < \infty,$ then
\begin{equation}\label{Equ R-bounded Lp}
\left( \E \left\| \sum_{k=1}^n \epsilon_k x_k \right\|_X \right)^{\frac12} \cong \left\| \left( \sum_{k=1}^n |x_k|^2 \right)^{\frac12} \right\|_p
\end{equation}
uniformly in $n$ and $x_1,\ldots,x_n.$
A linear mapping $u : Y \to B(X),$ where $Y$ is a further Banach space is called $R$-bounded if $R(u(y):\:\|y\|_Y \leq 1) < \infty.$
The following proposition gives a condition on the semigroup generated by a sectorial operator $A$ so that $A$ has a $\Hor^\beta$ calculus.

\begin{prop}\label{Prop Prelim Ha calculus}
Let $A$ be an $\omega$-sectorial operator for any $\omega > 0$ defined on an $L^p$ space for some $1 < p < \infty,$ and let $A$ have a bounded $\HI$ calculus.
Suppose that for some $\alpha > 0$ the set $\{ \exp(-e^{i\theta} 2^k t A) :\: k \in \Z \}$ is $R$-bounded for any $t > 0$ and $|\theta| < \frac{\pi}{2},$
with $R$-bound $\lesssim \left(\cos(\theta)\right)^{-\alpha}.$
Then for any $\beta > \alpha + \frac12,$ $A$ has a bounded $\Hor^{\beta}$ calculus.
Moreover, this calculus is an $R$-bounded mapping.
\end{prop}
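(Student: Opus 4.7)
\emph{Plan of proof.} The goal is to establish the full $R$-bounded $\Hor^\beta$ calculus. The strategy is to combine a Mellin/Fourier representation of $f(A)$ in terms of imaginary powers of $A$ with an $R$-bounded estimate on $A^{i\xi}$ extracted from the complex-time semigroup hypothesis, and to assemble the dyadic pieces by almost orthogonality.

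By density it suffices to treat $f\in \HI_0(\Sigma_\omega)\cap\Hor^\beta$ for some small $\omega>0$, where the $\HI$ calculus is already available. Decompose $f=\sum_{n\in\Z}\phi_n f$ and set $g_n:=(\phi_n f)\circ\exp$, supported in a translate of a fixed compact interval with $\sup_n\|g_n\|_{W^\beta_2(\R)}\leq \|f\|_{\Hor^\beta}$. Fourier inversion in the multiplicative variable gives
\[(\phi_n f)(t)=\frac{1}{\sqrt{2\pi}}\int_\R \hat g_n(\xi)\,t^{i\xi}\,d\xi,\qquad (\phi_n f)(A)=\frac{1}{\sqrt{2\pi}}\int_\R \hat g_n(\xi)\,A^{i\xi}\,d\xi.\]

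The central step is to show that $\{(1+|\xi|)^{-\alpha-1/2}A^{i\xi}:\xi\in\R\}$ is $R$-bounded. Starting from the Mellin identity $\Gamma(-i\xi)A^{i\xi}=\int_0^\infty t^{-i\xi-1}\exp(-tA)\,dt$ (made rigorous after regularization by $A(1+A)^{-2}$, using the bounded $\HI$ calculus), I rotate the contour to the ray $t=e^{i\theta}s$ with $\theta=-\sgn(\xi)(\pi/2-1/|\xi|)$. By Stirling ($|\Gamma(-i\xi)|\asymp|\xi|^{-1/2}e^{-|\xi|\pi/2}$) the scalar prefactor $e^{\theta\xi}/\Gamma(-i\xi)$ is of order $|\xi|^{1/2}$: the exponentials cancel. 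On the rotated ray the hypothesis yields $R(\{\exp(-e^{i\theta}2^ksA):k\in\Z\})\lesssim(\pi/2-|\theta|)^{-\alpha}=|\xi|^\alpha$ for each $s>0$, and splitting the $s$-integral dyadically invokes exactly this $R$-bound. This is precisely where the $R$-bounded (rather than merely norm-bounded) hypothesis on the semigroup is used essentially.

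With the $R$-bound on imaginary powers in hand, Cauchy--Schwarz in $\xi$ (using $(1+|\xi|)^\beta\hat g_n\in L^2(\R)$) produces a uniform $R$-bound on $\{(\phi_n f)(A): \|f\|_{\Hor^\beta}\leq 1\}$ for each $n$, provided $\beta>\alpha+\tfrac12$; the extra $\tfrac12$ comes from the integrability of $(1+|\xi|)^{2\alpha+1-2\beta}$ in Cauchy--Schwarz. To sum over $n$ I would introduce a larger bump $\tilde\phi_n$ with $\tilde\phi_n\phi_n=\phi_n$, factor $(\phi_n f)(A)=(\phi_n f)(A)\tilde\phi_n(A)$, and invoke the Paley--Littlewood equivalence $\|\sum_n\tilde\phi_n(A)y_n\|_p\asymp \|(\sum_n|\tilde\phi_n(A)y_n|^2)^{1/2}\|_p$ coming from the bounded $\HI$ calculus; together with \eqref{Equ R-bounded Lp} this collapses the sum into an $R$-bound. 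The main obstacle I anticipate is the rigorous handling of the contour rotation for $A^{i\xi}$ while preserving $R$-bounds -- the norm version is classical, but upgrading to $R$-bounds requires the dyadic clause $k\in\Z$ in the hypothesis to be threaded through the $s$-integral with care -- together with verifying that the Paley--Littlewood step itself is $R$-bounded, which should follow from the fact that a bounded $\HI$ calculus on $L^p$ is automatically $R$-bounded.
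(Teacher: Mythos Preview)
Your overall plan --- represent the dyadic pieces of $f(A)$ as Fourier-type integrals over an $R$-bounded operator family drawn from the complex semigroup, then reassemble via a Paley--Littlewood decomposition --- is the same as the paper's. The divergence is in \emph{which} operator family you use: you go through the imaginary powers $A^{i\xi}$ and the multiplicative Mellin transform, whereas the paper uses the regularized boundary propagators $(1+2^kA)^{-\alpha}\exp(i2^ktA)$ and the additive Fourier transform in $t$. This is not merely cosmetic, and your central step has a gap.

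The hypothesis furnishes an $R$-bound on $\{\exp(-e^{i\theta}2^ktA):k\in\Z\}$ for each \emph{fixed} angle $\theta$; the $R$-boundedness runs over the dyadic index $k$. Your contour-rotation argument chooses $\theta=\theta(\xi)$ depending on $\xi$, so when you test $R$-boundedness of $\{(1+|\xi|)^{-\gamma}A^{i\xi}:\xi\in\R\}$ on a tuple $\xi_1,\dots,\xi_n$ you are mixing operators from \emph{different} rays, and the hypothesis gives no joint control over such a family. ``Splitting the $s$-integral dyadically'' brings in the index $k$, but that is the wrong index here: you need $R$-boundedness in $\xi$, not in $k$, and after the split the pieces carry unimodular coefficients $2^{-ki\xi}$ so the sum is not absolutely convergent anyway. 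There is also an exponent slip: with your claimed power $\alpha+\tfrac12$ on the imaginary-power side, your own Cauchy--Schwarz factor $(1+|\xi|)^{2\alpha+1-2\beta}$ is integrable only for $\beta>\alpha+1$, not $\beta>\alpha+\tfrac12$. The paper sidesteps all of this by keeping $k$ as the $R$-boundedness index throughout: one writes $(1+2^kA)^{-\alpha}\exp(i2^ktA)$ as an integral in $s>0$ of semigroup operators $\exp(-(s-it)2^kA)$, which for each fixed $(s,t)$ form a $k$-family on a \emph{single} ray, so the hypothesis applies directly; the Hyt\"onen--Veraar $L^2$-criterion then turns the $t$-integral into an $R$-bound on $\{g(2^kA)(1+2^kA)^{-\alpha}:k\in\Z,\ \|g\|_{W^\beta_2}\le1\}$ for $\beta>\alpha+\tfrac12$, and the regularizing factor is removed at the end by multiplying with $(1+\lambda)^\alpha\phi(\lambda)$ for a cut-off $\phi$, which is harmless on the support of $g$.
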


\begin{proof}
This is proved in the case that $A$ has dense range in \cite[Lemma 4.72 and Proposition 4.79]{Kr}, see also \cite{KrW2}.
This proof for which we give a sketch applies also here.
First one deduces from the assumption of $R$-boundedness of the semigroup that
\[ \{ (1 + |t|)^{-\alpha} (1 + 2^k A)^{-\alpha} \exp(i 2^k t A) : \: t \in \R \} \]
is $R$-bounded with $R$-bound independent of $t \in \R.$
Then for $g \in C^\infty_c(0,\infty)$ a representation formula of $g(2^k A) (1 + 2^k A)^{-\alpha} $ is available, namely
\[ g(2^k A) (1 + 2^k A)^{-\alpha} x = \frac{1}{2\pi} \int_\R \hat{g}(t) ( 1 + |t| )^\beta ( 1 + |t| )^{-\beta} (1 + 2^k A)^{-\alpha} \exp(i2^ktA) x dt. \]
If $\beta > \alpha + \frac12,$ then $( 1 + |t| )^{-\beta} \| ( 1 + 2^k A)^{-\alpha} \exp(i2^k tA)\|$ is dominated by a function in $L^2(\R),$
and if $g$ belongs to $W^\beta_2(\R)$ then also $\hat{g}(t) (1 + |t| )^\beta \in L^2(\R).$
In fact, more can be said.
By \cite[Proposition 4.1, Remark 4.2]{HyVe}, the set
\[ \{ g(2^k A) ( 1 + 2^k A)^{-\alpha} :\: g \in C^\infty_c(0,\infty),\: \|g\|_{W^\beta_2(\R)} \leq 1,\: k \in \Z \} \]
is $R$-bounded.
Next one gets rid of the factor $(1 + 2^k A)^{-\alpha}$ above by using a function $\psi(\lambda) = ( 1 + \lambda)^\alpha \phi(\lambda)$ where $\phi \in C^\infty_c(0,\infty)$
and $\phi(\lambda) = 1$ for $\lambda \in [\frac12 ,2].$
The hypotheses of the proposition imply that $\{ \psi(2^k A) :\: k \in \Z \}$ is $R$-bounded. 
Then
\[ \{ g(2^k A):\: g \in C^\infty_c(0,\infty), \: \supp g \subset [\frac12,2], \: \|g\|_{W^\beta_2(\R)} \leq 1,\: k \in \Z \} \]
is $R$-bounded.
The hypotheses imply moreover that there holds the following equivalences of Paley-Littlewood type:
\[ \| f \|_p \cong \left\| \left( \sum_{k \in \Z} |\phi(2^k A) f|^2 \right)^{\frac12} \right\|_p \cong \left\| \left( \sum_{k \in \Z} |\tilde\phi(2^k A)\phi(2^k A) f|^2 \right)^{\frac12} \right\|_p \]
for a function $\phi \in C^\infty_c(0,\infty),\phi$ not vanishing identically zero, $\supp \phi \subset [\frac12,2]$ and $\tilde\phi = \phi(2^{-1} \cdot) + \phi + \phi(2 \cdot).$
Then one can show that $g(A)$ is bounded for $\|g\|_{\Hor^\beta} < \infty:$
\begin{align*}
 \|g(A) f\|_p & \cong \left\| \left( \sum_{k \in \Z} |\tilde\phi(2^k A) g(A) \phi(2^k A) f|^2 \right)^{\frac12} \right\|_p \\
& \cong \left\| \left( \sum_k |\tilde\phi g(2^{-k}\cdot)(2^k A) \phi(2^k A) f|^2 \right)^{\frac12} \right\|_p \\
& \lesssim R(\{\tilde\phi g(2^{-k} \cdot) : k \in \Z\}) \left\| \left( \sum_{k\in\Z} | \phi(2^kA) f |^2 \right)^{\frac12} \right\|_p \\
& \lesssim \|g\|_{\Hor^\beta} \|f\|_p.
\end{align*}
Thus $\{ g(A):\: \|g\|_{\Hor^\beta} \leq 1 \}$ is a bounded subset of $B(L^p).$
In a similar manner to the calculation right above, using the fact that $L^p$ has Pisier's property $(\alpha),$
one shows that this set is moreover $R$-bounded.
\end{proof}

The space $\Omega$ on which the operator $A$ acts will be a space of homogeneous type.
This means that $(\Omega,\rho)$ is a metric space endowed with a nonnegative Borel measure $\mu$ which satisfies the doubling condition:
There exists a constant $C > 0$ such that for all $x \in \Omega$ and $r>0,$ 
\[\mu(B(x,2r)) \leq C \mu(B(x,r)) < \infty,\]
where we set $B(x,r) = \{ y \in \Omega : \: \rho(x,y) < r \}.$
Note that the doubling condition implies the following strong homogeneity property:
There exists $C > 0$ and a dimension $d > 0$ such that for all $\lambda \geq 1,$ for all $x \in \Omega$ and all $r > 0$ we have
$\mu(B(x,\lambda r)) \leq C \lambda^d \mu(B(x,r)).$
We will assume that the space of homogeneous type $(\Omega,\mu,\rho)$ has the following two additional properties
\begin{align}
\mu(B(x,r)) & \cong r^d \text{ if }\diam(\Omega) = \infty, \nonumber \\
\mu(B(x,r)) & \cong \min(r^d,1)\text{ if }\diam(\Omega) < \infty \label{Equ add prop space 1}
\intertext{and}
\mu(B(x,r,R)) & \leq C (R^d - r^d) \quad (x \in \Omega,\: R > r > 0) \text{ if }\diam(\Omega) = \infty, \nonumber \\
\mu(B(x,r,R)) & \leq C(R-r) \min(R^{d-1},1) \quad (x \in \Omega,\: R > r > \frac12 R > 0)\text{ if }\diam(\Omega) < \infty \label{Equ add prop space 2}
\end{align}
where we denote $\diam(\Omega) = \sup\{\rho(x,y):\:x,y\in \Omega\}$ and $B(x,r,R) = B(x,R) \backslash B(x,r).$
Note that if \eqref{Equ add prop space 1} holds, then $\diam(\Omega) < \infty$ if and only if $\mu(\Omega) < \infty.$

\section{The Main Theorem}\label{Sec 3 Main Thm}

We let $(\Omega,\mu,\rho)$ be a space of homogeneous type with the additional properties \eqref{Equ add prop space 1} and \eqref{Equ add prop space 2}.
We further let $T_z = \exp(-zA)$ be a semigroup on $L^2(\Omega)$ with the properties:
The generator $A$ has a bounded $\HI(\Sigma_\omega)$ calculus for some $\omega \in (0,\pi)$ on $L^2(\Omega)$, and $T_z$ has an integral kernel $k_z(x,y)$ for $\Re z > 0$ i.e. $(T_z f)(x) = \int_\Omega k_z(x,y) f(y) d\mu(y)$ for any $f \in L^2(\Omega).$
We assume that
\begin{equation}\label{Equ add prop semigroup 1}
|k_z(x,y)| \leq C (\cos \arg z)^{-\beta} \frac{1}{\mu(B(x,|z|))} \frac{1}{|1 + \frac{\rho(x,y)^2}{z^2}|^{\frac{d+1}{2}}} \quad (z \in \C_+,\,x,y \in \Omega).
\end{equation}

\begin{prop}\label{Prop A has HI calculus}
Let $(\Omega,\mu,\rho)$ be a space of homogeneous type satisfying \eqref{Equ add prop space 1} and \eqref{Equ add prop space 2} and $T_t = \exp(-tA)$ a semigroup which acts on all $L^p(\Omega),\,1 < p < \infty.$ 
Assume that $T_z$ is analytic on $L^2(\Omega)$ on $z \in \C_+ = \{ \lambda \in \C:\: \Re \lambda > 0\}$ and bounded on each subsector $\Sigma_\omega$ for $\omega \in (0,\frac{\pi}{2}),$ and that $T_z$ has an integral kernel which satisfies \eqref{Equ add prop semigroup 1}.
Assume moreover that $A$ has a bounded $\HI(\Sigma_\omega)$ calculus on $L^2(\Omega)$ for some $\omega \in (0,\pi),$ which is the case e.g. when $A$ is self-adjoint.
Then the operator $A$ has an $\HI(\Sigma_\omega)$ calculus on $L^p(\Omega)$ for any $\omega \in (0,\pi)$ and $1 < p < \infty.$
\end{prop}

\begin{proof}
The proposition follows from \cite[Theorem 3.1]{DuRo}.
Indeed, let $\theta \in (0,\frac{\pi}{2}).$
The kernel $k_z(x,y)$ satisfies on $z \in \Sigma_\theta$ the bound
\begin{align*}
|k_z(x,y)| & \lesssim \frac{(\cos\arg z)^{-\beta}}{\mu(B(x,|z|))}\frac{1}{\left| 1 + \left(\frac{\rho(x,y)}{z}\right)^2 \right|^{\frac{d+1}{2}}} \\
& \lesssim_\theta \mu(B(x,|\Re z|))^{-1} \left(1 + \left(\frac{\rho(x,y)}{\Re z}\right)^2 \right)^{-\frac{d+1}{2}}
\end{align*}
since $|z| \cong \Re z$ for $z \in \Sigma_\theta$ and
$|1 + \left(\frac{\rho(x,y)}{\Re z}\right)^2| \leq 1 + \left|\frac{\rho(x,y)}{\Re z}\right|^2 \lesssim 1 + \left|\frac{\rho(x,y)}{z}\right|^2 \lesssim \left| 1 + \left( \frac{\rho(x,y)}{z} \right)^2 \right|.$
Then with $G_t$ given by \cite[(7)]{DuRo} and $g(x) = c (1 + x^2)^{-\frac{d+1}{2}},$ we can deduce from \cite[Theorem 3.1]{DuRo} that $A$ has a bounded $\HI(\Sigma_\omega)$ calculus on $L^p(\Omega)$
for any $p \in (1,\infty)$ and $\omega > \frac{\pi}{2} - \theta.$
\end{proof}

The following is the main theorem of this article.

\begin{thm}\label{Thm Main}
Let $(\Omega,\mu,\rho)$ be a space of homogeneous type satisfying \eqref{Equ add prop space 1} and \eqref{Equ add prop space 2} and $T_t = \exp(-tA)$ a semigroup which acts on all $L^p(\Omega),\,1 < p < \infty.$ 
Assume that $T_z$ is analytic on $L^2(\Omega)$ on $z \in \C_+$ and that $T_z$ has an integral kernel which satisfies \eqref{Equ add prop semigroup 1}.
Assume that for $z \in \C_+,$ $\|\exp(-zA)\|_{B(L^2(\Omega))} \lesssim (\cos(\arg z))^{-\frac{d-1}{2}-\beta}( 1 + |\log(\cos(\arg z))|)^2,$ which is the case e.g. when $A$ is self-adjoint.
Then the semigroup $\exp(-zA)$ satisfies on $X = L^p(\Omega)$ for any $1 < p < \infty$ the $R$-bound estimate
\[
R\left( \exp(-e^{i\theta} 2^j t A) :\: j \in \Z \right) \lesssim (\cos(\theta))^{-\frac{d-1}{2}-\beta} (1 + |\log(\cos(\theta))|)^2. 
\]
\end{thm}

For the proof of the theorem, we state two preliminary lemmas.

\begin{lem}\label{Lem Prelim 1}
Let $k_z(x,y)$ be analytic in $z \in \C_+$ and satisfy \eqref{Equ add prop semigroup 1}.
Let $\theta \in (-\frac{\pi}{2},\frac{\pi}{2})$ with $|\theta|$ sufficiently close to $\frac{\pi}{2}$ and $0< t < t_0<\infty.$
Then one has the estimate
\begin{align*} |k_{e^{i\theta}t_0 + t}&(x,y) - k_{e^{i\theta}t_0}(x,y)| \lesssim \\
& \frac{(\cos(\theta))^{-\beta}}{\mu(B(x,t_0))} \frac{\min(\cos(\theta) t_0,t)}{\cos(\theta) t_0} \left[ \left|1 - \frac{\rho(x,y)^2}{t_0^2}\right| + \cos(\theta) \right]^{-\frac{d+1}{2}} + \\
& \begin{cases}
\eqref{Equ Lem Prelim 6} & :\: t_0^2 \geq \rho(x,y)^2\text{ and }\frac{1}{\sqrt{2 - \rho(x,y)^2/t_0^2}-1} \geq 2t_0/t \\
\eqref{Equ Lem Prelim 7} & :\: t_0^2 \geq \rho(x,y)^2\text{ and }\frac{1}{\sqrt{2 - \rho(x,y)^2/t_0^2}-1} \leq 2t_0/t \\
\eqref{Equ Lem Prelim 8} & :\: t_0^2 \leq \rho(x,y)^2 \leq 2 t_0^2 \\
\eqref{Equ Lem Prelim 9} & :\: \rho(x,y)^2 \geq 2 t_0^2,
\end{cases}
\end{align*}
where \eqref{Equ Lem Prelim 6}, \eqref{Equ Lem Prelim 7}, \eqref{Equ Lem Prelim 8}, \eqref{Equ Lem Prelim 9} can be found in the proof of this lemma in Section \ref{Sec Proof Lem Prelim}.
\end{lem}

\begin{lem}\label{Lem Prelim 2}
Let $(\Omega,\mu,\rho)$ be a space of homogeneous type satisfying \eqref{Equ add prop space 1}.
Let $k_z(x,y)$ be analytic in $z \in \C_+$ and satisfy \eqref{Equ add prop semigroup 1}.
Let $t > 0,\:t_0 \in [1,2],\:l \in \Z$ with $l \leq L_{\max} = \max\{j \in \Z:\: 2^j t_0 \leq \diam(\Omega) \} + 1 \leq \infty,\theta \in (-\frac{\pi}{2},\frac{\pi}{2})$ with $|\theta|$ sufficiently close to $\frac{\pi}{2}$ and $x,y \in \Omega$ such that $\rho(x,y) \geq 3t.$
\begin{enumerate}
\item If $|\frac{\rho(x,y)^2}{(2^l t_0)^2} - 1| \leq \cos(\theta),$ then
\begin{align*} \sup_{j \in \Z: 2^j t_0 \geq t} & |k_{e^{i\theta}2^j t_0 + t}(x,y) - k_{e^{i\theta} 2^j t_0}(x,y)| \lesssim \\
& \frac{1}{\mu(B(x,2^l t_0))} \frac{\min(\cos(\theta) 2^l t_0,t)}{\cos(\theta) 2^l t_0} (\cos(\theta))^{-\frac{d+1}{2}-\beta}.
\end{align*}
\item If $\frac{\rho(x,y)^2}{(2^l t_0)^2} \in [1+\cos(\theta),2],$ then with $r = \rho(x,y)^2 / (2^l t_0)^2,$
\begin{align*} \sup_{j \in \Z: 2^j t_0 \geq t} & |k_{e^{i\theta}2^j t_0 + t}(x,y) - k_{e^{i\theta} 2^j t_0}(x,y)| \lesssim \frac{(\cos(\theta))^{-\beta}}{\mu(B(x,2^l t_0))} \times \\
\times & 
\begin{cases}
\left( r - 1 \right)^{-\frac{d+1}{2}} \left( 1 + \log \frac{1 - \sqrt{2 - r}}{\cos(\theta)} \right) \\
+ \left(1 - \sqrt{2 - r} \right)^{-\frac{d+1}{2}} - \left( \frac{t}{2^l t_0} \right)^{-\frac{d+1}{2}}, \\
\text{ if }\cos(\theta) \leq 1 - \sqrt{2 - r} \leq \frac{t}{2^l t_0}; \\
\left( r - 1 \right)^{-\frac{d+1}{2}} \left( 1 + \log \frac{t}{2^l t_0 \cos(\theta)} \right) + |\log \cos(\theta)| ,\\
\text{ if } \cos(\theta) \leq \frac{t}{2^l t_0} \leq 1 - \sqrt{2 - r}; \\
(\cos(\theta))^{-\frac{d+1}{2}}, \text{ if } 1 - \sqrt{2 - r} \leq \cos(\theta) \leq \frac{t}{2^l t_0};\text{ and} \\
\frac{t}{\cos(\theta) 2^l t_0} \left( r - 1 \right)^{-\frac{d+1}{2}}, \text{ if } \cos(\theta) \geq \frac{t}{2^l t_0}.
\end{cases}
\end{align*}
\item If $\frac{\rho(x,y)^2}{(2^l t_0)^2} \in [\frac12,1 - \cos(\theta)],$ then with $r = \rho(x,y)^2/(2^l t_0)^2,$
\begin{align*} \sup_{j \in \Z: 2^j t_0 \geq t} & |k_{e^{i\theta}2^j t_0 + t}(x,y) - k_{e^{i\theta} 2^j t_0}(x,y)| \lesssim \frac{(\cos(\theta))^{-\beta}}{\mu(B(x,2^l t_0))} \times \\
\times &
\begin{cases}
\left(1 - r  \right)^{-\frac{d+1}{2}} \left( 1 + \log \frac{1 -r }{\cos(\theta)} \right) + (1 - r)^{-\frac{d+1}{4}-1} \times \\
\times  \left[ (\sqrt{1-r} - \sqrt{2-r} + 1)^{-\frac{d-1}{2}} - (\sqrt{1-r} - \frac12 \sqrt{2-r} + \frac12 - \frac{\cos(\theta)}{2} )^{-\frac{d-1}{2}}\right] \\
+ |\log(\cos(\theta))|, \text{ if }\cos(\theta) \leq \sqrt{2-r} -1 \leq \frac12 \frac{t}{2^l t_0}; \\
(1-r)^{-\frac{d+1}{2}} \left( 1 + \log \frac{1-r}{\cos(\theta)} \right) + (1-r)^{-\frac{d+1}{4}-1} \times \\
\times \left[ (\sqrt{1-r} - \sqrt{2-r} + 1)^{-\frac{d-1}{2}} - (\sqrt{1-r} - \frac12 \sqrt{2-r} + \frac12 - \frac{\cos(\theta)}{2})^{-\frac{d-1}{2}} \right] \\
+ (\frac{t}{2^l t_0} - \sqrt{2-r} + 1)(1-r)^{-\frac{d+3}{2}} + |\log(\cos(\theta))|, \\
\text{ if }\max(\cos(\theta),\frac12 \frac{t}{2^l t_0}) \leq \sqrt{2-r} -1  \leq \frac{t}{2^l t_0}; \\
(1-r)^{-\frac{d+1}{2}}(1 + \log \frac{r-1}{\cos(\theta)}) + (1-r)^{-\frac{d+1}{4}-1} \times \\
\times \left[(\sqrt{1-r} - \sqrt{2-r} + 1)^{-\frac{d+1}{2}} - (\sqrt{1-r} - \frac12 \sqrt{2-r} + \frac12 - \frac{\cos(\theta)}{2})^{-\frac{d-1}{2}} \right]  \\
+ |\log(\cos(\theta))|, \text{ if }\cos(\theta) \leq \frac{t}{2^l t_0} \leq \sqrt{2-r} - 1; \\
(1-r)^{-\frac{d+1}{2}} + (\frac{t}{2^l t_0} - \sqrt{2-r} + 1)(1-r)^{-\frac{d+3}{2}} + |\log(\cos(\theta))|,\\
\text{ if }\frac12 \frac{t}{2^l t_0} \leq \sqrt{2-R} - 1 \leq \cos(\theta) \leq \frac{t}{2^l t_0}; \\
(1-r)^{-\frac{d+1}{2}} + |\log(\cos(\theta))|,\\
\text{ if }\sqrt{2-r}-1 \leq \min(\cos(\theta),\frac12 \frac{t}{2^l t_0}) \text{ and } \cos(\theta) \leq \frac{t}{2^l t_0};\\
\frac{t}{\cos(\theta) 2^l t_0} (1 - r)^{-\frac{d+1}{2}}\text{ if }\cos(\theta) \geq \frac{t}{2^l t_0}.
\end{cases}
\end{align*}
\end{enumerate}
\end{lem}

The proofs of Lemmas \ref{Lem Prelim 1} and \ref{Lem Prelim 2} are technical and dereferred to Section \ref{Sec Proof Lem Prelim} due to their length.

\begin{proof}[Proof of Theorem \ref{Thm Main}]
Let $\theta \in (-\frac{\pi}{2},\frac{\pi}{2}),\: j \in \Z,\: t_0 \in [1,2]$ and $x,y\in \Omega.$
Write in short $T_j = \exp(-e^{i\theta} 2^j t_0 A).$
Recall that $\{ T_j : \: j \in \Z \}$ is $R$-bounded on $L^p(\Omega)$ with $R$-bound $C < \infty,$ if

\[ \left\| \left( \sum_{j \in F} |T_j f_j|^2 \right)^{\frac12} \right\|_{L^p(\Omega)} \leq C \left\| \left( \sum_{j \in F} |f_j|^2 \right)^{\frac12} \right\|_{L^p(\Omega)}\]

for any finite index set $F \subseteq \Z$ and $f_j \in L^p(\Omega),\:j \in F.$
To prove the theorem, it thus suffices to show that 
\begin{equation}\label{Equ Proof Thm 1}
\left\| T: \begin{cases}
L^p(\Omega,\ell^2(F)) & \to L^p(\Omega,\ell^2(F))\\
(f_j)_{j \in F} & \mapsto (T_j f_j)_{j \in F}
\end{cases} \right\| \lesssim (\cos(\theta))^{-\frac{d-1}{2} - \beta} (1 + |\log(\cos(\theta))|)^2
\end{equation} independently of $F.$
To show this, we apply the boundedness criterion for singular integral operators with non-smooth kernels \cite[Theorem 1]{DuMc} in its vector-valued version \cite[Theorem 2.3]{MoLu}.
Note that the standing assumption in \cite{MoLu} that $\mu(\Omega) = \infty$ is not needed in \cite[Theorem 2.3]{MoLu}.
First note that $L^2(\Omega,\ell^2(F)) = \ell^2(F,L^2(\Omega))$ isometrically, so that for $p = 2,$ $\|T\| \leq \sup_{j \in \Z} \|T_j\|_{B(L^2(\Omega))}$ and that the assumption of the theorem gives that $T : L^2(\Omega,\ell^2(F)) \to L^2(\Omega,\ell^2(F))$ has the norm bound in \eqref{Equ Proof Thm 1}.
To conclude, it suffices to show that $T$ is a vector-valued singular integral operator with non-smooth kernel in the sense of \cite[Definition 2.1]{MoLu}.
We choose the approximation to identity $A_t = A_t' = ((f_j)_{j \in F} \mapsto (\exp(-tA) f_j)_{j \in F}).$
Note that (1.4), (1.5) and (1.6) in \cite{MoLu} are satisfied for this choice due to the Poisson estimate \eqref{Equ add prop semigroup 1} and the volume control \eqref{Equ add prop space 1}.
Note that $T - A_t'T = T - TA_t: (f_j)_{j \in F} \mapsto ((T_j - T_j \exp(-tA)) f_j)_{j \in F},$ that this operator has the $B(\ell^2(F))$-valued kernel with entry $k_{e^{i\theta} 2^j t_0}(x,y) - k_{e^{i\theta} 2^j t_0 + t}(x,y)$ on the diagonal, so that its $B(\ell^2(F))$-norm is controlled by
$\sup_{j \in \Z} |k_{e^{i\theta} 2^j t_0}(x,y) - k_{e^{i\theta} 2^j t_0 + t}(x,y)|.$
By \cite[Definition 2.1 (i) and (ii)]{MoLu}, we are reduced to show that
\begin{align}
\int_{\rho(x,y) \geq 3 t} & \sup_{j \in \Z} |k_{e^{i\theta} 2^j t_0 + t}(x,y) - k_{e^{i\theta} 2^j t_0}(x,y)| d\mu(x) \lesssim \nonumber \\ & (\cos(\theta))^{-\frac{d-1}{2} - \beta} (1 + |\log(\cos(\theta))|)^2 \quad (y \in \Omega,t > 0). \label{Equ Proof Thm 2}
\end{align}
At first, we estimate the above integral by
\[ \int_{\rho(x,y) \geq 3t} \sup_{j:\: 2^j t_0 \leq t} | \ldots | d\mu(x) + \int_{\rho(x,y) \geq 3t} \sup_{j:\: 2^j t_0 \geq t} | \ldots | d\mu(x)\]
and start by estimating the first integral.
Estimate crudely $|k_{e^{i\theta} 2^j t_0 + t}(x,y) - k_{e^{i\theta} 2^j t_0}(x,y)| \leq |k_{e^{i\theta} 2^j t_0 + t}(x,y)| + |k_{e^{i\theta} 2^j t_0}(x,y)|$ and consider both summands separately.
For $\rho(x,y) \geq 3t \geq \frac32 2^j t_0 + \frac32 t,$ we have $|1 + \frac{\rho(x,y)^2}{(e^{i\theta} 2^j t_0 + t)^2}| \cong \frac{\rho(x,y)^2}{|e^{i\theta} 2^j t_0 +t|^2}.$
Thus, 
\begin{align*}
\int_{\rho(x,y) \geq 3t} & \sup_{j:\: 2^j t_0 \leq t} |k_{e^{i\theta} 2^j t_0 + t}(x,y)| d\mu(x) \\
& \lesssim \int_{\rho(x,y) \geq 3t} \sup_{j:\: 2^j t_0 \leq t} \frac{(\cos(\theta))^{-\beta}}{\mu(B(x,|e^{i\theta} 2^j t_0 + t|))} \frac{|e^{i\theta} 2^j t_0 + t|^{d+1}}{\rho(x,y)^{d+1}} d\mu(x) \\
& \lesssim \int_{\rho(x,y) \geq 3t} \sup_{j:\:2^j t_0 \leq t} \frac{(\cos(\theta))^{-\beta}}{\mu(B(x,t))} \frac{t^{d+1}}{\rho(x,y)^{d+1}} d\mu(x) \\
& \lesssim \sum_{k=0}^\infty \int_{B(y,3t\cdot 2^k, 3t \cdot 2^{k+1})} \frac{(\cos(\theta))^{-\beta}}{\mu(B(x,t))} 2^{-k(d+1)} d\mu(x) \\
& \lesssim \sum_{k=0}^\infty \sum_{n=1}^{M_k} \int_{B(y_n^k,t)} \frac{(\cos(\theta))^{-\beta}}{\mu(B(x,t))} 2^{-k(d+1)} d\mu(x) \\
& \lesssim \sum_{k=0}^\infty 2^{kd} 2^{-k(d+1)} (\cos(\theta))^{-\beta} \lesssim (\cos(\theta))^{-\beta},
\end{align*}
where $B(y,3t \cdot 2^0, 3t \cdot 2^{0+1})$ should be replaced by $B(y,3t \cdot 2),$ and we used the two well-known facts on spaces of homogeneous type that $B(y,3t \cdot 2^k, 3t \cdot 2^{k+1})$ can be covered by $M_k \cong 2^{kd}$ balls $B(y_n^k,t)$ and that $\sup_{y \in \Omega} \int_{B(y,t)} \frac{1}{\mu(B(x,t))} d\mu(x) \cong 1.$
Note that if $\mu(\Omega) < \infty,$ then the above sum over $k$ was finite.

Furthermore, for $\rho(x,y) \geq 3t \geq 3 \cdot 2^j t_0,$ we have $|1 + \frac{\rho(x,y)^2}{(e^{i\theta} 2^j t_0)^2}| \cong \frac{\rho(x,y)^2}{(2^j t_0)^2}.$
Thus, similarly to the above calculation, with $M_k \lesssim \left( \frac{t \cdot 2^k}{2^j t_0} \right)^d$ this time,
\begin{align*}
\int_{\rho(x,y) \geq 3t} & \sup_{j:\: 2^j t_0 \leq t} |k_{e^{i\theta} 2^j t_0}(x,y)| d\mu(x) \\
& \lesssim \sum_{j:\: 2^j t_0 \leq t} \int_{\rho(x,y) \geq 3t} \frac{(\cos(\theta))^{-\beta}}{\mu(B(x,2^jt_0))} \frac{(2^j t_0)^{d+1}}{\rho(x,y)^{d+1}} d\mu(x) \\
& \lesssim \sum_{j:\: 2^j t_0 \leq t} \sum_{k=0}^\infty \int_{B(y,3t \cdot 2^k,3t \cdot 2^{k+1})} \frac{(\cos(\theta))^{-\beta}}{\mu(B(x,2^jt_0))} \frac{(2^j t_0)^{d+1}}{(3t \cdot 2^k)^{d+1}} d\mu(x) \\
& \lesssim \sum_{j:\: 2^j t_0 \leq t} \sum_{k=0}^\infty \sum_{n=1}^{M_k} \int_{B(y^k_n,2^j t_0)} \frac{(\cos(\theta))^{-\beta}}{\mu(B(x,2^jt_0))} \frac{(2^j t_0)^{d+1}}{(3t \cdot 2^k)^{d+1}} d\mu(x) \\
& \lesssim \sum_{j:\: 2^j t_0 \leq t} \sum_{k=0}^\infty (\cos(\theta))^{-\beta} \frac{t^d 2^{kd}}{2^{jd} t_0^d} \frac{2^{j(d+1)} t_0^{d+1}}{t^{d+1} 2^{kd} 2^k} \\
& \lesssim \sum_{j:\: 2^j t_0 \leq t} \sum_{k=0}^\infty (\cos(\theta))^{-\beta} t^{-1} 2^{-k} 2^j t_0 
\lesssim (\cos(\theta))^{-\beta}.
\end{align*}

To conclude \eqref{Equ Proof Thm 2}, it now suffices to show
\begin{align}
\int_{\rho(x,y) \geq 3t} & \sup_{j:\:2^j t_0 \geq t} |k_{e^{i\theta} 2^j t_0 + t}(x,y) - k_{e^{i\theta} 2^j t_0}(x,y)| d\mu(x) \lesssim \nonumber \\
& (\cos(\theta))^{-\frac{d-1}{2}-\beta} ( 1 + | \log(\cos(\theta)) | )^2 \quad (y \in \Omega,\: t > 0). \label{Equ Proof Thm 3}
\end{align}
To this end, we use the estimates from Lemma \ref{Lem Prelim 2}.
Therefore, as a first step, we divide the integral in \eqref{Equ Proof Thm 3} into
\begin{align*} \int_{\rho(x,y) \geq 3t} & \leq \sum_{l=L_{\min}}^{L_{\max}} \int_{|\rho(x,y)^2/(2^l t_0)^2 - 1| \leq \cos(\theta)}
+ \sum_{l=L_{\min}}^{L_{\max}} \int_{\rho(x,y)^2/(2^l t_0)^2 \in [1 + \cos(\theta),2]} \\
& + \sum_{l=L_{\min}}^{L_{\max}} \int_{\rho(x,y)^2/(2^l t_0)^2 \in [\frac12, 1 - \cos(\theta)]} =: \sum_{l=L_{\min}}^{L_{\max}} I_1+I_2+I_3,
\end{align*}
where $L_{\min} = \min\{ l \in \Z :\: 2^l t_0 \geq \frac{3}{2\sqrt{1 - \cos(\theta)}} t \}$ and $L_{\max} = \max\{ l \in \Z:\: 2^l t_0 \leq \diam(\Omega) \} + 1.$
For $I_1,$ we have by Lemma \ref{Lem Prelim 2} 1. and the volume control \eqref{Equ add prop space 2},
\begin{align*}
& \int_{|\rho(x,y)^2/(2^l t_0)^2 - 1| \leq \cos(\theta)} \sup_{j :\: 2^j t_0 \geq t} |k_{e^{i\theta} 2^j t_0 + t}(x,y) - k_{e^{i\theta} 2^j t_0}(x,y)| d\mu(x) \\
& \lesssim \int_{\ldots} \frac{1}{\mu(B(x,2^l t_0))} \frac{\min(\cos(\theta)2^l t_0,t)}{\cos(\theta) 2^l t_0} (\cos(\theta))^{-\frac{d+1}{2} - \beta} d\mu(x) \\
& \lesssim 2^{ld} \cos(\theta) 2^{-ld} \frac{\min(\cos(\theta)2^l t_0,t)}{\cos(\theta) 2^l t_0} (\cos(\theta))^{-\frac{d+1}{2} - \beta} \\
& \cong  \frac{\min(\cos(\theta)2^l t_0,t)}{\cos(\theta) 2^l t_0} (\cos(\theta))^{-\frac{d-1}{2} - \beta}.
\end{align*}
Now summing over $l,$ we obtain
\begin{align*}
\sum_{l=L_{\min}}^{L_{\max}} & \frac{\min(\cos(\theta)2^l t_0,t)}{\cos(\theta) 2^l t_0} (\cos(\theta))^{-\frac{d-1}{2} - \beta} \\
& \lesssim (\cos(\theta))^{-\frac{d-1}{2} - \beta} \left( \int_1^{(\cos(\theta))^{-1}} \frac{dx}{x} + \int_{(\cos(\theta))^{-1}}^\infty (\cos(\theta))^{-1} x^{-1} \frac{dx}{x} \right) \\
& \cong (\cos(\theta))^{-\frac{d-1}{2} - \beta} ( 1 + |\log(\cos(\theta))| ).
\end{align*}

Now for $I_2.$
We distinguish the cases I) $(\cos(\theta) 2^l t_0 \leq t)$ and II) $(\cos(\theta) 2^l t_0 \geq t).$
In case I), we decompose $I_2$ into three integrals according to Lemma \ref{Lem Prelim 2} 2.
We write $r = \rho(x,y)^2 / (2^l t_0)^2.$
\begin{align*}
I_2 & \leq \int_{\cos(\theta) \leq 1 - \sqrt{2 - r} \leq \frac{t}{2^l t_0}, 1 + \cos(\theta) \leq r \leq 2} + \int_{\cos(\theta) \leq \frac{t}{2^l t_0} \leq 1 - \sqrt{2 - r}} + \int_{1 - \sqrt{2 - r} \leq \cos(\theta) \leq \frac{t}{2^l t_0}} \\
& =: I_2^1 + I_2^2 + I_2^3.
\end{align*}
Then by the volume condition \eqref{Equ add prop space 2} and Lemma \ref{Lem Prelim 2} 2.,
\begin{align*}
I_2^1 & \lesssim \int_{\cos(\theta) \leq 1 - \sqrt{2-r} \leq \frac{t}{2^l t_0},1 + \cos(\theta) \leq r \leq 2} \!\!\!\!\!\!\!\!\!\!\!\!\!\!\!\!\!\!\!\! (\cos(\theta))^{-\beta}(r-1)^{-\frac{d+1}{2}} \left( 1 + \log \frac{r-1}{\cos(\theta)} \right) r^{\frac{d}{2}} \frac{dr}{r} \\
& \lesssim (\cos(\theta))^{-\beta} \int_{1 + \cos(\theta)}^2 (r-1)^{-\frac{d+1}{2}} \left( 1 + \log \frac{r-1}{\cos(\theta)} \right) dr \\
& \lesssim (\cos(\theta))^{-\frac{d-1}{2}-\beta} \left( 1 + \max_{\cos(\theta) \leq r \leq 1} \log \frac{r}{\cos(\theta)} \right) \\
& \cong (\cos(\theta))^{-\frac{d-1}{2} - \beta} (1 + |\log(\cos \theta)|).
\end{align*}
Further,
\begin{align*}
I_2^2 & \lesssim  (\cos(\theta))^{-\beta} \int_{1 + \cos(\theta) \leq r \leq 2,\frac{t}{2^l t_0} \leq 1 - \sqrt{2-r}}
\left((r-1)^{-\frac{d+1}{2}} 
\left( 1 + \log \frac{t}{2^l t_0 \cos(\theta)} \right) \right. \\
& \left. + |\log(\cos(\theta))| \right) r^{\frac{d}{2}} \frac{dr}{r} \\
& \lesssim (\cos(\theta))^{-\beta} \left( \int_{1+ \cos(\theta)}^2 (r-1)^{-\frac{d+1}{2}} dr ( 1 + \log \frac{t}{2^l t_0} + |\log(\cos(\theta))|) + |\log(\cos(\theta))| \right) \\
& \lesssim (\cos(\theta))^{-\frac{d-1}{2} - \beta}( 1 + |\log(\cos(\theta))| ),
\end{align*}
due to $\frac{t}{2^l t_0} \leq 1 - \sqrt{2 - r}\leq r-1 \leq 1$ in this case.
Finally,
\begin{align*}
I_2^3 & \lesssim (\cos(\theta))^{-\beta} \int_{1 - \sqrt{2 - r} \leq \cos(\theta),1 + \cos(\theta) \leq r \leq 2} (\cos(\theta))^{-\frac{d+1}{2}}  r^{\frac{d}{2}} \frac{dr}{r} \\
& \lesssim (\cos(\theta))^{- \frac{d+1}{2} - \beta} \int_{1 + \cos(\theta)}^{1 + 2 \cos(\theta)} r^{\frac{d}{2}} \frac{dr}{r} \\
& \cong (\cos(\theta))^{- \frac{d-1}{2} - \beta},
\end{align*}
where we have used that $\frac12 (r-1) \leq 1 - \sqrt{2-r},$ and thus $r-1 \leq 2 (1 - \sqrt{2-r}) \leq 2 \cos(\theta).$
Now summing up the three estimates, we get $I_2 \lesssim (\cos(\theta))^{-\frac{d-1}{2} - \beta} ( 1 + |\log(\cos(\theta))| ),$
and thus, as for $\sum_{l = L_{\min}}^{L_{\max}} I_1,$ we deduce $\sum_{l = L_{\min}}^{L_{\max}} I_2 \lesssim  (\cos(\theta))^{-\frac{d-1}{2} - \beta} ( 1 + |\log(\cos(\theta))| )^2.$
In case II), we have by the volume condition \eqref{Equ add prop space 2} and Lemma \ref{Lem Prelim 2} 2.,
\begin{align*}
I_2 & \cong (\cos(\theta))^{-\beta} \frac{t}{\cos(\theta) 2^l t_0} \int_{1 + \cos(\theta)}^2 ( r - 1)^{-\frac{d+1}{2}} r^{\frac{d}{2}} \frac{dr}{r} \\
& \cong (\cos(\theta))^{-\beta} \frac{t}{\cos(\theta) 2^l t_0} \left[-(r-1)^{-\frac{d-1}{2}} \right]_{1 + \cos(\theta)}^2 
\lesssim (\cos(\theta))^{-\frac{d-1}{2} - \beta} \frac{t}{\cos(\theta) 2^l t_0}.
\end{align*}
Now summing over those $l$ with $\cos(\theta) 2^l t_0 \geq t,$ we obtain
$ \sum_{l:\:\cos(\theta) 2^l t_0 \geq t} I_2 \lesssim (\cos(\theta))^{-\frac{d-1}{2}-\beta} \int_0^1 x \frac{dx}{x} =  (\cos(\theta))^{-\frac{d-1}{2}-\beta}.$\\

It remains to estimate $I_3,$ for which we use part 3. of Lemma \ref{Lem Prelim 2}.
Again we distinguish the cases I) $(\cos(\theta) 2^l t_0 \leq t)$ and II) $(\cos(\theta) 2^l t_0 \geq t)$.
In case I), we decompose $I_3 \lesssim \int \frac{(\cos(\theta))^{-\beta}}{\mu(B(x,2^l t_0))} ( 1 + |\log(\cos(\theta))| ) d\mu(x) + \int \text{remainder } d\mu(x).$
The first term is estimated by
\begin{align*}
& \int_{\rho(x,y)^2/(2^l t_0)^2 \in [\frac12, 1 - \cos(\theta)]}  \frac{(\cos(\theta))^{-\beta}}{\mu(B(x,2^l t_0))} (1 + | \log(\cos(\theta)) |) d\mu(x) \\
& \lesssim \int_{\frac12}^{1 - \cos(\theta)} (\cos(\theta))^{-\beta} ( 1 + | \log(\cos(\theta)) |) r^{\frac{d}{2}} \frac{dr}{r} \\
& \cong (\cos(\theta))^{-\beta} ( 1 + | \log(\cos(\theta)) | ).
\end{align*}
The remainder is decomposed into the following five integrals corresponding to the first five cases in Lemma \ref{Lem Prelim 2} 3.
We write again $r = \rho(x,y)^2 / ( 2^l t_0 )^2.$
\begin{align*}
& \int_{\frac12 \leq r \leq 1 - \cos(\theta),\cos(\theta) \leq \sqrt{2 - r} -1 \leq \frac12 \frac{t}{2^l t_0}}
+ \int_{\frac12 \leq r \leq 1 - \cos(\theta), \max(\cos(\theta),\frac12 \frac{t}{2^l t_0}) \leq \sqrt{2-r}-1 \leq \frac{t}{2^l t_0}} \\
& + \int_{\frac12 \leq r \leq 1 - \cos(\theta), \cos(\theta) \leq \frac{t}{2^l t_0} \leq \sqrt{2-r} -1}
+ \int_{\frac12 \leq r \leq 1 - \cos(\theta), \frac12 \frac{t}{2^l t_0} \leq \sqrt{2-r} -1 \leq \cos(\theta) \leq \frac{t}{2^l t_0}}\\
& + \int_{\frac12 \leq r \leq 1 - \cos(\theta), \sqrt{2-r} - 1 \leq \min(\cos(\theta),\frac12 \frac{t}{2^l t_0}}
=: I_3^1 + I_3^2 + I_3^3 + I_3^4 + I_3^5.
\end{align*}
We estimate the five integrals separately.
For $I_3^1,$ we have
\begin{align*}
& (\cos(\theta))^\beta I_3^1 \lesssim \int_{\frac12 \leq r \leq 1 - \cos(\theta), \cos(\theta) \leq 1 - r \leq \frac12 \frac{1}{\sqrt{2} - 1} \frac{t}{2^l t_0}} (1-r)^{-\frac{d+1}{2}} ( 1 + \log \frac{1-r}{\cos(\theta)}) r^{\frac{d}{2}} \frac{dr}{r} \\
& + \int_{\frac12 \leq r \leq 1 - \cos(\theta),\cos(\theta) \leq \sqrt{2-r}-1 \leq \frac12 \frac{t}{2^l t_0}} (1 -r)^{-\frac{d+1}{4}-1} \times \\
& \times (1-r) (\sqrt{1-r} - (\sqrt{2-r} -1))^{-\frac{d+1}{2}} r^{\frac{d}{2}} \frac{dr}{r} \\
& \lesssim (\cos(\theta))^{-\frac{d-1}{2}} ( 1 + |\log(\cos(\theta))| ) + \int_{\frac12}^{1 - \cos(\theta)} (1-r)^{-\frac{d+1}{4}-1 +1 - \frac{d+1}{4}} dr \\
& \lesssim  (\cos(\theta))^{-\frac{d-1}{2}} ( 1 + |\log(\cos(\theta))| ) + (\cos(\theta))^{-\frac{d-1}{2}},
\end{align*}
where we have used the mean value theorem to estimate
\begin{align*} 
& (\sqrt{1-r} - (\sqrt{2-r} -1))^{-\frac{d-1}{2}} - (\sqrt{1-r} - \frac12 ( \sqrt{2-r} - 1) - \frac{\cos(\theta)}{2})^{-\frac{d-1}{2}} \lesssim \\
& (1-r) (\sqrt{1-r} - (\sqrt{2-r} -1))^{-\frac{d+1}{2}}.
\end{align*}
For $I_3^2,$ we have
\begin{align*}
& (\cos(\theta))^\beta I_3^2 \lesssim \int_{\frac12 \leq r \leq 1 - (\cos(\theta)),\max(\cos(\theta),\frac12 \frac{t}{2^l t_0}) \leq \sqrt{2-r} - 1 \leq \frac{t}{2^l t_0}} \\
& \left\{ 1^{st}\text{ term }+ 2^{nd} \text{ term }+(\frac{t}{2^l t_0} - \sqrt{2-r} +1)(1-r)^{-\frac{d+3}{2}} \right\} r^{\frac{d}{2}} \frac{dr}{r},
\end{align*}
where the $1^{st}$ and $2^{nd}$ term are given in Lemma \ref{Lem Prelim 2} 3., and can be controlled as in $I_3^1$ by $(\cos(\theta))^{-\frac{d-1}{2}} ( 1 + |\log(\cos(\theta))| ).$
The third term, we estimate by
\begin{align*}
& \lesssim \int_{\frac12 \leq r \leq 1 - \cos(\theta),\max(\cos(\theta),\frac12 \frac{t}{2^l t_0} \leq \sqrt{2-r} -1 \leq \frac{t}{2^l t_0}} ( \frac{t}{2^l t_0} - (\sqrt{2-r} -1))(1-r)^{-\frac{d+3}{2}} dr \\
& \lesssim \int_{\frac12 \leq r \leq 1 - \cos(\theta)} (1-r)(1-r)^{-\frac{d+3}{2}} dr \lesssim (\cos(\theta))^{-\frac{d-1}{2}}.
\end{align*}
For $I_3^3,$ we apply exactly the same estimate as for $I_3^1,$ to get $(\cos(\theta))^\beta I_3^3 \lesssim (\cos(\theta))^{-\frac{d-1}{2}} ( 1 + | \log(\cos(\theta)) | ),$ too.
For $I_3^4,$ we have again with $1^{st},2^{nd}$ and $3^{rd}$ term given in the estimate in Lemma \ref{Lem Prelim 2} 3.,
\begin{align*}
& (\cos(\theta))^{\beta} I_3^4 \lesssim \int_{\frac12 \leq r \leq 1 - \cos(\theta), \frac12 \frac{t}{2^l t_0} \leq \sqrt{2-r} - 1 \leq \cos(\theta) \leq \frac{t}{2^l t_0} } \\
&  \left\{ 1^{st} \text{ term } + 2^{nd} \text{ term } + 3^{rd} \text{ term } \right\} r^{\frac{d}{2}} \frac{dr}{r}.
\end{align*}
The $1^{st}$ and $2^{nd}$ term can be controlled as in $I_3^1,$ whereas the $3^{rd}$ term can be controlled as in $I_3^2.$
We get $(\cos(\theta))^\beta I_3^4 \lesssim (\cos(\theta))^{-\frac{d-1}{2}} ( 1+ | \log(\cos(\theta)) | ).$

Eventually, we estimate $I_3^5$ by
\begin{align*}
& (\cos(\theta))^\beta I_3^5 \lesssim \int_{\frac12 \leq r \leq 1 - \cos(\theta),\sqrt{2-r}-1 \leq \min(\cos(\theta),\frac12 \frac{t}{2^l t_0}) }
(1-r)^{-\frac{d+1}{2}} r^{\frac{d}{2}} \frac{dr}{r} \\
& \lesssim (\cos(\theta))^{-\frac{d-1}{2}}.
\end{align*}

Now summing up we obtain as for $I_2,$ $\sum_{l= L_{\min}}^{L_{\max}} I_3 \leq \sum_{l= L_{\min}}^{L_{\max}} I_3^1 + I_3^2 + I_3^3 + I_3^4 + I_3^5 \lesssim (\cos(\theta))^{-\frac{d-1}{2} - \beta} ( 1 + |\log(\cos(\theta))|)^2.$

In case II), we have
\begin{align*}
& (\cos(\theta))^\beta I_3 \lesssim \int_{\frac12 \leq r \leq 1 - \cos(\theta)} \frac{t}{\cos(\theta) 2^l t_0} ( 1 - r)^{-\frac{d+1}{2}} r^{\frac{d}{2}} \frac{dr}{r} \\
& \lesssim \frac{t}{\cos(\theta) 2^l t_0} (\cos(\theta))^{-\frac{d-1}{2}}.
\end{align*}
Summing over $l$ with $\cos(\theta) 2^l t_0 \geq t,$ we obtain as for $I_2,$ $\sum_{l:\: \cos(\theta) 2^l t_0 \geq t} I_3 \lesssim (\cos(\theta))^{-\frac{d-1}{2} - \beta}.$

We have shown that
\[ \sum_{l = L_{\min}}^{L_{\max}} I_1 + I_2 + I_3 \lesssim (\cos(\theta))^{-\frac{d-1}{2} - \beta} ( 1 + | \log(\cos(\theta)) | )^2, \]
and thus \eqref{Equ Proof Thm 3} and hence the theorem follows.
\end{proof}

\begin{rem}
We have proved in Theorem \ref{Thm Main} that the semigroup is $R$-bounded when taking dyadic arguments of the form $e^{i\theta} 2^j t_0,\: t_0 \in [1,2]$ fixed.
It is unclear whether one gets also an $R$-bounded set when this argument is replaced by a continuous variant, i.e. if $R(\exp(-e^{i\theta} t_0 A):\: t_0 > 0) \lesssim (\cos(\theta))^{-\frac{d-1}{2} - \beta} (1 + | \log(\cos(\theta)) | )^2$ holds under the same hypotheses as in Theorem \ref{Thm Main}, and at least the above proof does not seem to work.
By the method in \cite[Last part of 2.16 Example]{KuWe}, which passes from dyadic arguments to continuous ones, one only gets a cruder estimate for the continuous variant involving an additional factor $(\cos(\theta))^{-2}.$
\end{rem}

\begin{cor}\label{Cor Ha calculus}
Let $(\Omega,\mu,\rho)$ be a space of homogeneous type of either finite or infinite measure, satisfying \eqref{Equ add prop space 1} and \eqref{Equ add prop space 2}.
Let $T_t = \exp(-tA)$ a semigroup which acts on all $L^p(\Omega),\,1 < p < \infty.$ 
Assume that $T_z$ is analytic on $L^2(\Omega)$ on $z \in \C_+$ and that $T_z$ has an integral kernel $k_z(x,y)$ which satisfies the Poisson estimate
\[ | k_{re^{i\theta}}(x,y)| \leq C (\cos(\theta))^{-\beta} \frac{1}{\mu(B(x,r))} \frac{1}{\left|1 + \frac{\rho(x,y)^2}{(re^{i\theta})^2} \right|^{\frac{d+1}{2}}} \] 
for any $x,y \in \Omega, r > 0, \theta \in (-\frac{\pi}{2},\frac{\pi}{2})$ and some $C,\beta \geq 0.$
Assume moreover that $A$ has a bounded $\HI(\Sigma_\omega)$ calculus on $L^2(\Omega)$ for some $\omega \in (0,\pi),$ and that for $r > 0, \theta \in (-\frac{\pi}{2},\frac{\pi}{2}),$
\[ \|\exp(-re^{i\theta}A)\|_{B(L^2(\Omega))} \leq C (\cos(\theta))^{-\frac{d-1}{2}-\beta}( 1 + |\log(\cos(\theta))| )^2,\] 
both of which are satisfied e.g. when $A$ is self-adjoint.
Then $A$ has a bounded $\Ha$ calculus on $L^p(\Omega)$ for any $1<p<\infty$ and $\alpha > \frac{d}{2}.$
Moreover this calculus is an $R$-bounded mapping, i.e.
\[ R(f(A):\:\|f\|_{\Ha} \leq 1) < \infty.\]
\end{cor}

\begin{proof}
This follows immediately from Propositions \ref{Prop Prelim Ha calculus}, \ref{Prop A has HI calculus} and Theorem \ref{Thm Main}.
\end{proof}

\section{Examples}\label{Sec 4 Examples}

In this section, we want to give some applications of Theorem \ref{Thm Main} and Corollary \ref{Cor Ha calculus}.
Note first that in the most classical example, namely $A = (-\Delta)^{\frac12}$ and $\exp(-zA)$ the Poisson semigroup on $L^p(\R^d)$ for some $1 < p < \infty$ and $d \in \N,$ Corollary \ref{Cor Ha calculus} gives the sharp order of derivation of the classical H\"ormander multiplier theorem and strengthens it in that it includes the $R$-boundedness of spectral multipliers whose associated functions have bounded $\Ha$ norm.

For a generalization, we consider the situation in \cite{MMMM}.
Let $M$ be a positive integer.
Consider the constant coefficient second order, $M\times M$ system, differential operator
\[ L u  = \sum_{\gamma=1}^M \sum_{r,s=1}^{d+1} \left( \partial_r ( a_{rs}^{\alpha \gamma} \partial_s u_\beta ) \right)_{1 \leq \alpha \leq M}, \]
where $a_{rs}^{\alpha \beta}$ are real coefficients for $r,s=1,\ldots,d+1$ and $\alpha,\gamma = 1,\ldots,M.$
Here, $u$ is a function defined on the upper half space $\R^{d+1}_+ = \R^d \times [0,\infty).$
Further, we assume as in \cite{MMMM} the ellipticity condition
\[ \sum_{\alpha,\gamma = 1}^M \sum_{r,s = 1}^{d+1} \Re \left[ a^{\alpha \gamma}_{rs} \xi_r \xi_s \overline{\eta_\alpha} \eta_\beta \right] \geq \kappa_0 |\xi|^2 |\eta|^2 \]
for every $(\xi_r)_{1 \leq r \leq d+1} \in \R^{d+1},$ $(\eta_\alpha)_{1 \leq \alpha \leq M} \in \C^M$ and some $\kappa_0 > 0.$
Then in \cite{MMMM} the following Dirichlet problem on $\R^{d+1}_+$ is considered:
\[ \begin{cases}
Lu = 0 \text{ in }\R^{d+1}_+ \\
u|_{\partial\R^{d+1}_+}^{n.t.} = f \in L^p(\R^d;\C^M),
\end{cases} \]
where $\partial\R^{d+1}_+ = \R^d \times \{ 0 \},$ $n.t.$ means non-tangential trace of $u,$ and $f$ is a given function in $L^p(\R^d;\C^M),\:1 < p < \infty.$
If $\mathfrak{A}^{dis}_L \not= \emptyset,$ a certain condition, see \cite[(3.12)]{MMMM}, which will be satisfied in our example, then this problem is well-posed in $L^p(\R^d;\C^M)$ \cite[Theorem 4.1]{MMMM}, so it possesses a unique solution $u.$
As moreover the coefficients defining $L$ are constant, we have $\partial_r[u(\cdot+(0,t))]|_{\cdot = x} = (\partial_r u)(x + (0,t)),$ so that the expression $T_t f(x) := u(x,t),\: x \in \R^d,\:t \geq 0$ defines a semigroup on $L^p(\R^d;\C^M).$
In the sequel, we are interested in the H\"ormander functional calculus of the negative generator of that semigroup.
Note that for some cases, this semigroup is given by a convolution kernel.
We now restrict to the following specific example.

\paragraph{Lam\'e system of elasticity.}
Assume that $M = d + 1$ above.
The so-called Lam\'e operator in $\R^{d+1}$ has the form
\begin{equation}\label{Equ Lame operator}
Lu = \mu \Delta u + (\lambda + \mu) \nabla \dive u,\quad u = (u_1,\ldots,u_{d+1}),
\end{equation}
where the constants $\lambda,\mu \in \R$ (typically called Lam\'e moduli) are assumed to satisfy $\mu > 0$ and $2\mu + \lambda > 0.$
Then according to \cite[Theorem 5.2]{MMMM}, $T_tf(x) = u(x,t)$ is given by
\begin{align}
& (T_tf)_\alpha(x) = \frac{4\mu}{3\mu + \lambda}\frac{1}{\omega_d} \int_{\R^d} \frac{t}{(|x-y|^2 + t^2)^{\frac{d+1}{2}}}f_\alpha(y) dy \nonumber \\
& + \frac{\mu + \lambda}{3 \mu + \lambda} \frac{2(d+1)}{\omega_d} \sum_{\gamma=1}^{d+1}\int_{\R^d} \frac{t(x-y,t))_\alpha (x-y,t)_\gamma}{(|x-y|^2 + t^2)^{\frac{d+3}{2}}}f_\gamma(y) dy \quad (\alpha = 1,\ldots,d+1), \label{Equ Lame sgr}
\end{align}
where $\omega_d$ is the area of the unit sphere $S^d$ in $\R^{d+1},$ and $(x-y,t)_\alpha = x_\alpha - y_\alpha$ if $\alpha \in \{ 1 , \ldots , d \}$ and $(x-y,t)_{d+1} = t.$

\begin{prop}\label{Prop Lame}
The semigroup in \eqref{Equ Lame sgr} is strongly continuous on $L^p(\R^d;\C^{d+1})$ for $1 < p < \infty$, has an analytic extension for $\Re z > 0$ and if $k_z(x,y) = (k_{z;\alpha\gamma})_{\alpha,\gamma=1}^{d+1}(x,y)$ denotes its $(d+1)\times(d+1)$ matrix valued integral kernel, then each of its components $k_{z,\alpha\gamma}(x,y)$  satisfies the Poisson estimate \eqref{Equ add prop semigroup 1} with $\beta = 1.$
Further, the negative generator $A$ of the semigroup has an $\HI$ calculus on $L^2(\R^d;\C^{d+1})$ and 
\begin{align*}
\|\exp(-e^{i\theta}tA)\|_{B(L^2(\R^d;\C^{d+1}))} & \lesssim (\cos(\theta))^{-1}\max(1+ |\log(\cos(\theta))|,(\cos(\theta))^{-\frac{d-1}{2}}) \\
& \lesssim (\cos(\theta))^{-\frac{d-1}{2}-\beta} ( 1+ |\log(\cos(\theta))|)^2
\end{align*}
for $t > 0$ and $|\theta| < \frac{\pi}{2}.$
\end{prop}

\begin{proof}
For the strong continuity of the semigroup, we show first that $\sup_{t > 0}$ $\|T_t\|_{L^p(\R^d;\C^{d+1}) \to L^p(\R;\C^{d+1})} < \infty.$
As $T_t$ is given by a linear combination of convolutions, we have $\|T_t\|_{p \to p} \lesssim \int_{\R^d} \frac{t}{(|y|^2 + t^2)^{\frac{d+1}{2}}} dy + \int_{\R^d} t \frac{|y|^2 + t^2}{(|y|^2 + t^2)^{\frac{d+3}{2}}} dy = 2 \int_{\R^d} \frac{1}{(|y/t|^2 + 1)^{\frac{d+1}{2}}} \frac{dy}{t^d},$
which is clearly bounded independently of $t > 0.$
Thus it suffices to show the strong continuity $\|T_t f - f\|_p \to 0$ as $t \to 0$ for $f$ belonging to the dense subset of continuous functions with compact support.
In the following calculation, we use that the kernel $k_t(x,y)$ of $T_t$ satisfies $\int_{\R} k_t(x,y) dy = 1_{M_{d+1}},$ since $T_t1_{\ell^2_{d+1}} = 1_{\ell^2_{d+1}}$ is the unique solution of the Dirichlet problem with constant initial value, and we use Jensen's inequality.
\begin{align*}
& \|T_t f - f\|_p^p \cong \sum_{\alpha = 1}^{d+1} \int_{\R^d} \left| \int_{\R^d} c_1 \frac{t}{(|y|^2 + t^2)^{\frac{d+1}{2}}}f_\alpha(x-y) dy
\right. \\
& \left. + \sum_{\gamma = 1}^{d+1} \int_{\R^d} c_2 t \frac{(y,t)_\alpha (y,t)_\gamma}{(|y|^2 + t^2)^{\frac{d+3}{2}}} f_\gamma(x-y) dy - f_\alpha(x)  \right|^p dx \\
& = \sum_{\alpha = 1}^{d+1} \int_{\R^d} \left| \int_{\R^d} c_1 \frac{t}{(|y|^2 + t^2)^{\frac{d+1}{2}}}[f_\alpha(x-y)-f_\alpha(x)] dy \right. \\
& \left. + \sum_{\gamma =1}^{d+1} \int_{\R^d} c_2 t \frac{(y,t)_\alpha (y,t)_\gamma}{(|y|^2 + t^2)^{\frac{d+3}{2}}} [f_\gamma(x-y) dy - f_\gamma(x)] dy \right|^p dx \\
& \lesssim \sum_{\alpha = 1}^{d+1} \int_{\R^d} \left\{ c_1 t \int_{\R^d} \frac{1}{(|y|^2 + t^2)^{\frac{d+1}{2}}} |f_\alpha(x-y)-f_\alpha(x)|^p dy \right. \\
& \left.+ c_2 t \sum_{\gamma = 1}^{d+1} \int_{\R^d} \frac{|(y,t)_\alpha (y,t)_\gamma|}{(|y|^2 + t^2)^{\frac{d+3}{2}}} |f_\gamma(x-y) - f_\gamma(x)|^p dy \right\} dx \\
& = \sum_{\alpha = 1}^{d+1} \int_{\R^d} \left\{ c_1 \int_{\R^d} \frac{1}{(|y|^2 + 1)^{\frac{d+1}{2}}} |f_\alpha(x-ty)-f_\alpha(x)|^p dy \right. \\
& \left. + c_2 \sum_{\gamma = 1}^{d+1} \int_{\R^d} \frac{|(y,1)_\alpha (y,1)_\gamma|}{(|y|^2 + 1)^{\frac{d+3}{2}}} |f_\gamma(x- ty) - f_\gamma(x)|^p dy \right\} dx \\
& \cong \sum_{\alpha} \int_{|x| \leq R} \int_{\R^d} \ldots dy + \sum_{\gamma} \int_{\R^d} \ldots dy dx + \int_{|x| \geq R} \int_{\R^d} \ldots dy + \sum_{\gamma} \int_{\R^d} \ldots dy dx.
\end{align*}
Now choose first $\epsilon > 0$ and $R >> 1$ sufficiently large.
Then there exists $t_0 << 1$ such that $|f_\alpha(x-ty)-f_\alpha(x)|^p < \frac{\epsilon}{\mu(B(0,R))}$ for $t \leq t_0$ and $\alpha \in \{1,\ldots,d+1\}.$
Thus the integral over $|x| \leq R$ is $\lesssim \epsilon.$
Let $\supp f \subset B(0,r).$
Then the integral over $|x| \geq R$ is $\lesssim \int_{|y| \geq 1/t_0 (R - r)} \frac{1}{(|y|^2+1)^{\frac{d+1}{2}}} 2^p \|f\|_p^p dy \to 0$ as $R \to \infty.$
We have proved the strong continuity of the semigroup.

For the analyticity, extend the definition of $T_t$ from \eqref{Equ Lame sgr} by replacing $t > 0$ by a complex $z$ with $\Re z > 0$ everywhere.
Clearly, \eqref{Equ Lame sgr} is an analytic expression in $t,$ so that we get an analytic family $T_z.$
The claimed $B(L^2(\R^d;\C^{d+1}))$ estimate of $T_z$ follows from the integral kernel estimate of $\frac{z}{(|\cdot|^2 + z^2)^{\frac{d+1}{2}}},$ which is shown in \cite[p.~348]{GaPy} and gives the estimate for the first expression in \eqref{Equ Lame sgr}, and also for the second expression by the pointwise estimate $\frac{|z(y,z)_\alpha(y,z)_\gamma|}{\left| |y|^2 + z^2 \right|^{\frac{d+3}{2}}} \leq (\cos(\arg z))^{-1}\frac{|z|}{\left| |y|^2 + z^2 \right|^{\frac{d+1}{2}}},$ which is shown in the following.

Now for the claimed Poisson estimate \eqref{Equ add prop semigroup 1} with $\beta = 1.$
Fix some $\alpha,\gamma \in \{1,\ldots,d+1\}.$
We have by \eqref{Equ Lame sgr}, 
\begin{equation}\label{Equ Lame kernel}
k_{z,\alpha\gamma}(x,y) = c_1 \delta_{\alpha = \gamma} \frac{z}{(|x-y|^2 + z^2)^{\frac{d+1}{2}}} + c_2 \frac{z(x-y,z)_\alpha (x-y,z)_\gamma}{(|x-y|^2 + z^2)^{\frac{d+3}{2}}}.
\end{equation}
Clearly, the first summand admits the complex Poisson estimate \eqref{Equ add prop semigroup 1} even with $\beta = 0.$
For the second summand, it clearly suffices to show that $|(x,z)_\alpha (x,z)_\gamma| \lesssim (\cos(\theta))^{-1} \left| |x|^2 + z^2 \right|,$ where $\arg z = \theta.$
We distinguish the four cases $\alpha,\gamma \leq d;\: \alpha,\gamma = d+1;\: \alpha \leq d,\: \gamma = d + 1$ and $\alpha = d + 1,\: \gamma \leq d.$
Suppose first $\alpha,\gamma \leq d.$
If $|x|^2 \geq 2 |z|^2,$ then $|x_\alpha x_\gamma| \leq \frac12 (x_\alpha^2 + x_\gamma^2) \leq \frac12 |x|^2 \lesssim \left| |x|^2 + z^2 \right|.$
If $|x|^2 \leq \frac12 |z|^2,$ then $|x_\alpha x_\gamma| \leq \frac12 |x|^2 \leq \frac14 |z|^2 \leq \frac12 (|z|^2 - |x|^2) \leq \frac12 \left| z^2 + |x|^2 \right|.$
If $\frac12 |z|^2 \leq |x|^2 \leq 2 |z|^2,$ then 
\begin{align*} 
|x_\alpha x_\gamma| \cos(\theta) & \lesssim \cos(\theta) |z|^2 \leq |z|^2 \cos(\theta) + \left| |z|^2 (\cos^2(\theta) - \sin^2(\theta) + |x|^2 \right| \\
& \cong |2 (\Re z)(\Im z)| + \left|(\Re z)^2 - (\Im z)^2 +|x|^2 \right| \\
&= |\Im(z^2)| + \left|\Re(z^2) + |x|^2\right| \cong \left|z^2 + |x|^2 \right|.
\end{align*}
The other three cases can be treated in the same manner.

We now show that the negative generator $A$ of $T_t$ has a bounded $\HI$ calculus on $L^2(\R^d;\C^{d+1}).$
According to \cite[Theorem 2.4]{CDMY}, it suffices to show that both
\begin{equation}\label{Equ sgr scalar 2}
\int_0^\infty t \| A e^{-tA} f\|_{2}^2 dt \lesssim \|f\|_2^2 \text{ and }\int_0^\infty t \|A' e^{-tA'} f\|_{2}^2 dt \lesssim \|f\|_2^2
\end{equation}
hold.
We have with $F$ denoting the Fourier transform, using the Plancherel formula and Fubini,
\begin{align*}
\int_0^\infty t \|Ae^{-tA}f\|_{L^2(\R^d;\C^{d+1})}^2 dt & = \int_0^\infty \sum_{\alpha = 1}^{d+1} t \left\|\frac{\partial}{\partial t} (e^{-tA} f)_\alpha\right\|_{L^2(\R^d)}^2 dt \\
& = \int_0^\infty \sum_{\alpha = 1}^{d+1} t \left\| \sum_{\gamma = 1}^{d+1} \frac{\partial}{\partial t} k_{t,\alpha\gamma} \ast f_\gamma \right\|_2^2 dt \\
& = \int_0^\infty \sum_{\alpha = 1}^{d+1} t \left\| \sum_{\gamma = 1}^{d+1} \frac{\partial}{\partial t} F[ k_{t,\alpha\gamma}] \cdot F[f_\gamma] \right\|_2^2 dt \\
& \lesssim \sum_{\alpha,\gamma = 1}^{d+1} \int_{\R^d} |F[f_\gamma](\xi)|^2 \int_0^\infty t \left| \frac{\partial}{\partial t} F[k_{t,\alpha\gamma}](\xi) \right|^2 dt d\xi.
\end{align*}
For the first estimate in \eqref{Equ sgr scalar 2}, it thus suffices to show that
\begin{equation}\label{Equ Lame sgr 2}
\int_0^\infty t \left| \frac{\partial}{\partial t} F[k_{t,\alpha\gamma}](\xi) \right|^2 dt \lesssim 1
\end{equation}
independently of $\xi \in \R^d.$
We decompose $k_{t,\alpha\gamma} =k_{t,\alpha\gamma}^1 + k_{t,\alpha\gamma}^2$ into the two summands according to \eqref{Equ Lame kernel}.
For the first summand, we have $F[k_{t,\alpha\gamma}^1(\xi)] = c_1 \delta_{\alpha = \gamma} e^{-t|\xi|}.$
Thus,
\begin{align*}
& \int_0^\infty t \left| \frac{\partial}{\partial t} F[k_{t,\alpha\gamma}^1](\xi)\right|^2 dt = c_1^2 \delta_{\alpha = \gamma} \int_0^\infty t |\xi|^2 e^{-2t|\xi|} dt \\
& = c_1^2 \delta_{\alpha = \gamma} \int_0^\infty t e^{-2t} dt < \infty.
\end{align*}
Next we claim that 
\begin{equation}\label{Equ Lame sgr 3}
F \left[x \mapsto \frac{t}{(|x|^2 + t^2)^{\frac{d+3}{2}}}\right](\xi) = c_d \frac{|\xi|}{t} e^{-t|\xi|}(1 + \frac{1}{t|\xi|}),
\end{equation}
from which we shall easily deduce $F[k_{t,\alpha\gamma}^2](\xi).$
First note that the function which we Fourier transform in \eqref{Equ Lame sgr 3} is invariant under rotation, so also its Fourier transform is.
Thus we can assume that $\xi = (\xi_1,0,\ldots,0)$ with $\xi_1 \geq 0$ and we get
(l.h.s. of \eqref{Equ Lame sgr 3}) $= c_1 \int_{\R} \ldots \int_{\R} \frac{t}{(x_1^2 + x_2^2 + \ldots + x_d^2 + t^2)^{\frac{d+3}{2}}} e^{-ix_1 \xi_1} dx_2 \ldots dx_d dx_1.$
Note that $\int_{\R} \frac{t}{(x^2 + s^2)^a} dx = c_a \frac{t}{(s^2)^{a-\frac12}}$ for any $a > \frac12,$ so that by induction, we get
by \cite[p.~202]{Ober},
(l.h.s. of \eqref{Equ Lame sgr 3}) $= c_d' \int_{\R} \frac{t}{(x_1^2 + t^2)^2} e^{-ix_1\xi_1} dx_1 = c_d' 2 t \sqrt{\pi} \Gamma(2)^{-1} \left( \frac{\xi_1}{2t} \right)^{\frac32} K_{\frac32}(t \xi_1),$
where $K_{\frac32}(x) = \sqrt{\frac{\pi}{2x}} e^{-x} ( 1 + \frac1x )$ is a modified Bessel function.
This immediately gives \eqref{Equ Lame sgr 3}.
Now we calculate $\frac{\partial}{\partial t}F[k_{t,\alpha\gamma}^2](\xi)$ and distinguish the five cases $\alpha = \gamma = d+1;\:\alpha \leq d$ and $\gamma = d+1;\: \alpha = d+1 $ and $\gamma \leq d;\:\alpha,\gamma \leq d$ and $\alpha \neq \gamma;$ and finally $\alpha = \gamma \leq d.$

If $\alpha = \gamma = d+1,$ then $\frac{\partial}{\partial t} F[k_{t,\alpha\gamma}^2](\xi)= c_d \frac{\partial}{\partial t} \left(t|\xi| e^{-t|\xi|}(1 + \frac{1}{t|\xi|}) \right) = c_d e^{-t|\xi|}(-t|\xi|^2).$
Thus, $\int_0^\infty t \left| \frac{\partial}{\partial t} F[k_{t,\alpha\gamma}^2](\xi) \right|^2 dt = c_d \int_0^\infty t^2 |\xi|^2 e^{-2t |\xi|}(-t|\xi|)^2 \frac{dt}{t} < \infty$ independently of $|\xi|.$
If $\alpha \leq d$ and $\gamma = d+1,$ then $\frac{\partial}{\partial t} F[ k_{t,\alpha\gamma}^2 ](\xi) = c_d i \frac{\partial}{\partial t}\frac{\partial}{\partial \xi_\alpha}\left( |\xi| e^{-t|\xi|}(1 + \frac{1}{t|\xi|} ) \right) = c_d i e^{-t|\xi|}(t \xi_\alpha |\xi| - \xi_\alpha).$
Thus, $\int_0^\infty t \left| \frac{\partial}{\partial t} F[k_{t,\alpha\gamma}^2](\xi)\right|^2 dt = c_d \int_0^\infty e^{-t} (t^2 \frac{\xi_\alpha}{|\xi|} - t \frac{\xi_\alpha}{|\xi|} )^2 \frac{dt}{t} < \infty$ independently of $\xi.$
If $\alpha = d+1$ and $\gamma \leq d,$ one obtains the same as above, roles of $\alpha$ and $\gamma$ interchanged.
If $\alpha,\gamma \leq d$ and $\alpha \neq \gamma,$ we have
$\frac{\partial}{\partial t} F[k_{t,\alpha \gamma}^2](\xi) = - c_d \frac{\partial}{\partial t} \frac{\partial}{\partial \xi_\alpha} \frac{\partial}{\partial \xi_\gamma} (e^{-t|\xi|}(\frac{|\xi|}{t} + \frac{1}{t^2})) = - c_d e^{-t|\xi|} (-t\xi_\alpha \xi_\gamma + \frac{\xi_\alpha \xi_\gamma}{|\xi|}).$
Thus, $\int_0^\infty t \left| \frac{\partial}{\partial t} F[k_{t,\alpha\gamma}^2](\xi)\right|^2 dt = c_d \int_0^\infty e^{-2t} ( -t^2 \frac{\xi_\alpha\xi\gamma}{|\xi|^2} + t \frac{\xi_\alpha\xi_\gamma}{|\xi|^2})^2 \frac{dt}{t} < \infty$ independently of $\xi.$
If finally $\alpha = \gamma \leq d,$ then $\frac{\partial}{\partial t} F[k_{t,\alpha\gamma}^2](\xi) = - c_d e^{-t|\xi|} ( -t \xi_\alpha^2 + |\xi| + \frac{\xi_\alpha^2}{\xi} ),$ and again $\int_0^\infty t \left| \frac{\partial}{\partial t} F[k_{t,\alpha\gamma}^2](\xi)\right|^2 dt < \infty$ independently of $\xi.$

We have shown the first inequality in \eqref{Equ sgr scalar 2}.
The second one follows by the same proof, there are only signs without further impact which change.
This shows that $A$ has an $\HI$ calculus.
\end{proof}

\begin{cor}\label{Cor Lame}
Let $A$ be the negative generator of the Lam\'e semigroup given in \eqref{Equ Lame sgr} on $L^p(\R^d;\C^{d+1})$ for some $1 < p < \infty.$
Then $A$ has an $\Ha$ calculus for any $\alpha > \frac{d}{2} + 1.$
Moreover, $\{f(A):\: \|f\|_{\Ha} \leq 1 \}$ is $R$-bounded on $L^p(\R^d;\C^{d+1})$ for these $\alpha.$
\end{cor}

\begin{proof}
The corollary follows immediately from Propositions \ref{Prop Prelim Ha calculus} and \ref{Prop Lame} once we show that we can apply Proposition \ref{Prop A has HI calculus} and that $\{T(e^{i\theta} 2^k t_0): \: k \in \Z \}$ is $R$-bounded with 
\begin{equation}\label{Equ R-bound Lame}
R\left(\left\{ T(e^{i\theta} 2^k t_0) :\: k \in \Z \right\} \right) \lesssim (\cos(\theta))^{-\frac{d-1}{2} - 1} ( 1 + |\log(\cos(\theta))| )^2.
\end{equation}
For the application of Proposition \ref{Prop A has HI calculus}, note that a careful inspection of the proof of \cite[Theorem 3.1]{DuRo} shows that a version in $L^p(\R^d;\C^{d+1})$ also holds, more precisely that:
if $T_z$ is an analytic semigroup on $L^2(\R^d;\C^{d+1})$ for $\Re z > 0$ such that its negative generator $A$ has a bounded $\HI$ calculus on $L^2(\R^d;\C^{d+1})$ and each of the $(d+1) \times (d+1)$ components of its matrix valued integral kernel $k_{z,\alpha\gamma}(x,y)$ satisfies \eqref{Equ add prop semigroup 1}, then $A$ has a bounded $\HI$ calculus on $L^p(\R^d;\C^{d+1})$ for $1 < p < \infty.$
Note that we have shown the $\HI$ calculus on $L^2(\R^d;\C^{d+1})$ and the bounds \eqref{Equ add prop semigroup 1} for $k_{z,\alpha\gamma}(x,y)$ in Proposition \ref{Prop Lame}.
It remains to show \eqref{Equ R-bound Lame}, which would follow immediately from Theorem \ref{Thm Main}, weren't it for the vector valued character.
We give the adaption details now.
First note that $\left( \E \left\| \sum_{k=1}^n \epsilon_k f_k \right\|_{L^p(\R^d;\C^{d+1})}^2 \right)^{\frac12} \cong \sum_{\alpha= 1}^{d+1} \left\| \left( \sum_{k=1}^n |f_{\alpha,k}|^2 \right)^{\frac12} \right\|_{L^p(\R^d)} \cong \left\| \left( \sum_{\alpha=1}^{d+1} \sum_{k=1}^n |f_{\alpha,k}|^2 \right)^{\frac12} \right\|_{L^p(\R^d)}.$
Thus,

$\{T(e^{i\theta}2^kt_0):\: k\in\Z \}$ is $R$-bounded on $L^p(\R^d;\C^{d+1})$ iff
\[\left\| \left( \sum_{\alpha=1}^{d+1} \sum_{k} |(T(e^{i\theta}2^{k} t_0 A)f_k)_\alpha|^2 \right)^{\frac12} \right\|_p \lesssim \left\| \left( \sum_{\alpha=1}^{d+1} \sum_k |f_{\alpha,k}|^2 \right)^{\frac12} \right\|_p,\]
iff $T \in B(L^p(\R^d;\ell^2_{\{1,\ldots,d+1\}\times \N})),$ where $T(f_{\alpha,j})_{\alpha,j} = (T(e^{i\theta}2^{j} t_0)f_j)_\alpha.$
In view of \cite[Definition 2.1]{MoLu}, choose now the approximation of identity $A_t(f_{\alpha,j})_{\alpha,j} = (T_tf_j)_\alpha.$
Then $TA_t(f_{\alpha,j}) = T(T_tf_j) = (T(e^{i\theta}2^{j}t_0)T_t f_j) = A_t T(f_{\alpha,j}).$
Further, the kernel of $A_t,$ called $a_t$ satisfies
\[ A_t(f_{\alpha,j}) = \int_{\R^d} \sum_{\gamma = 1}^{d+1} k_{t,\alpha\gamma}(x-y)f_{\gamma,j}(y) dy \]
and
\begin{align*}
\|a_t(x-y)\|_{B(\ell^2_{\{1,\ldots,d+1\} \times \N})} & = \sup_{w_{\alpha,j}:\sum_{\alpha,j}|w_{\alpha,j}|^2 \leq 1} \left(\sum_{\alpha,j} \left| \sum_{\gamma = 1}^{d+1} k_{t,\alpha\gamma}(x-y)w_{\gamma,j}\right|^2  \right)^{\frac12} \\
& \lesssim \sup_{\alpha,\gamma = 1}^{d+1} |k_{t,\alpha\gamma}(x-y)| \lesssim \frac{1}{\mu(B(x,t))} \frac{1}{\left|1 + \frac{|x-y|^2}{t^2}\right|^{\frac{d+1}{2}}},
\end{align*}
so that $A_t$ is indeed an approximation of identity in the sense of \cite[Definition 1.1]{MoLu}.
If $k_T(x,y)$ and $k_{TA_t}(x,y)$ denote the $B(\ell^2_{\{1,\ldots,d+1\}\times \N})$ valued kernels of $T$ and $TA_t$, then we have in view of the application of \cite[Theorem 2.3]{MoLu},
\begin{align*}
& \int_{|x-y|\geq 3t} \|k_T(x,y) - k_{TA_t}(x,y)\|_{B(\ell^2_{\{1,\ldots,d+1\}\times \N})} \lesssim \\
& \sup_{\alpha,\gamma=1}^{d+1} \int_{|x-y| \geq 3t} \sup_{j \in \Z} |k_{e^{i\theta}2^j t_0,\alpha\gamma}(x-y) - k_{e^{i\theta}2^j t_0 +t,\alpha\gamma}(x-y)| d\mu(x).
\end{align*}
Apply now the Poisson estimate and analyticity of $k_{z,\alpha\gamma}(x-y)$ in $z,$ for fixed $\alpha$ and $\gamma,$ exactly as in the proof of Theorem \ref{Thm Main}, to deduce \eqref{Equ R-bound Lame} as wanted.
\end{proof}

\begin{rem}
Let us compare Corollary \ref{Cor Lame} with known H\"ormander functional calculi for ``Lam\'e operators'' in the literature.
Let $L$ denote the operator as in \eqref{Equ Lame operator}, i.e. $Lu = \mu \Delta u + (\lambda + \mu)\nabla \dive u,$ where $u : \Omega \to \R^{d+1}$ and $\Omega \subset \R^d$ is an open subset satisfying the interior ball condition, i.e. there exists a positive constant $c$ such taht for all $x \in \Omega$ and all $r \in(0,\frac12 \diam(\Omega)),\:\mu(B(x,r)) \geq c r^d.$
Let further $A$ denote the negative generator of the semigroup in \eqref{Equ Lame sgr}, so that if $\Omega = \R^d,$ for functions $u : \R^{d+1}_+ \to \C^{d+1}$ and $f \in L^p(\R^d;\C^{d+1}),$ we have $Lu = 0,\; u|_{\partial\R^{d+1}_+}^{n.t.} = f$ if and only if $u(x,t) = e^{-tA}f(x).$
If $\Omega$ is bounded, the operator $-L$ on $L^p(\Omega;\C^{d+1})$ for $p \in (q_\Omega',q_\Omega)$ where $q_\Omega > 2$ is some constant, has a $\Ha$ calculus for $\alpha > d | \frac1p - \frac12 | + \frac12$ \cite[Theorem 5.1]{KuU}.
In contrast, for $A$ we get a H\"ormander functional calculus on the full range $p \in (1,\infty),$ but with a worse derivation exponent $\alpha > \frac{d}{2} + 1.$
Note that $A$ is \emph{not} self-adjoint on $L^2(\R^d;\C^{d+1}),$ in contrast to $L$ on $L^2(\Omega;\C^{d+1}),$ so that $A$ and $L$ are of a quite different nature.
\end{rem}

\paragraph{Dirichlet-to-Neumann operator}
Another application of Theorem \ref{Thm Main} and Corollary \ref{Cor Ha calculus} is the following operator from \cite{OtE}.
There the authors suppose that $\Omega$ is the smooth boundary of an open connected subset $\tilde\Omega$ of $\R^{d+1}$
and $A$ is the Dirchlet-to-Neumann operator defined as follows:
Given $\phi \in L^2(\Omega)$ solve the Dirichlet problem
\begin{align*}
 \Delta u & = 0 \text{ weakly on }\tilde\Omega \\
 u|_\Omega & = \phi
\end{align*}
with $u \in W^{1}_2(\tilde\Omega).$
If $u$ has a weak normal derivative $\frac{\partial u}{\partial \nu}$ in $L^2(\Omega),$
then $\phi \in D(A)$ and $A\phi = \frac{\partial u}{\partial \nu}.$
This operator is a pseudodifferential operator, self-adjoint on $L^2(\Omega).$
Then in \cite{OtE} it is shown that the semigroup satisfies the complex Poisson estimate:
\begin{align*}
|k_z(x,y)| & \leq C (\cos (\theta))^{-2(d-1)d} \frac{\min(|z|,1)^{-d}}{\left( 1 + \frac{|x-y|}{|z|}\right)^{d+1}} \\
& \lesssim (\cos(\theta))^{-2(d-1)d} \frac{1}{\mu(B(x,|z|))} \frac{1}{\left|1 + \frac{|x-y|^2}{z^2} \right|^{\frac{d+1}{2}}}
\end{align*}
for all $x,y \in \Omega$ and $\Re z > 0,$ where $\theta = \arg z.$
Further a $\Ha$ calculus for $A$ with $\alpha > \frac{d}{2}$ is derived in \cite[Section 7]{OtE}.
Suppose that $\Omega$ satisfies \eqref{Equ add prop space 1} and \eqref{Equ add prop space 2}.
Then since $A$ is self-adjoint on $L^2(\Omega),$
we can apply Corollary \ref{Cor Ha calculus} to deduce that $A$ has an $R$-bounded $\Ha$ calculus for $\alpha > \frac{d}{2} + 2(d-1)d.$
Note that our derivation order in this functional calculus is worse than the one obtained in \cite[Section 7]{OtE}, but since it is $R$-bounded, it contains square function estimates like
\[ \left\| \left( \sum_{j=1}^n |g_j(A) f_j|^2 \right)^{\frac12} \right\|_p \lesssim \max_{j=1}^n \|g_j\|_{\Ha} \left\| \left( \sum_{j=1}^n |f_j|^2 \right)^{\frac12} \right\|_p .\]

\paragraph{Pseudodifferential operators on compact manifolds without boundary}

Let $\Omega$ be a compact closed (i.e. without boundary) $d$-dimensional Riemannian $C^\infty$-manifold and $A$ a classical, self-adjoint, strongly elliptic pseudodifferential operator on $\Omega$ of order $1$ such that $\gamma(A) = \inf\{\Re z:\: z \in \sigma(A)\} \geq 0.$
Then according to \cite[Theorem 3.14]{GiGr}, the semigroup generated by $-A$ has an integral kernel satisfying
\[ |k_z(x,y)| \lesssim (\cos(\theta))^{-\beta} e^{-\gamma(A) \Re z} \frac{|z|}{\rho(x,y) + |z|} ( (\rho(x,y) + |z|)^{-d} + 1) \quad (\Re z > 0) \]
with $\theta = \arg z$ and $\beta =\frac72 d + 11.$
If $\Omega$ satisfies \eqref{Equ add prop space 1}, then this estimate readily gives \eqref{Equ add prop semigroup 1}.
Thus if $\Omega$ also satisfies \eqref{Equ add prop space 2}, we can appeal to Corollary \ref{Cor Ha calculus} and deduce that $A$ has an $R$-bounded $\Ha$ calculus for $\alpha > \frac{d}{2} + \frac72 d + 11$ on $L^p(\Omega),\: 1 < p < \infty.$
The order of this calculus is worse than what is known in the literature for this kind of operator ($\alpha > \frac{d}{2},$ see \cite{SeSo}), but at least, our result includes the $R$-boundedness of the calculus.

\section{Proofs of Lemmas \ref{Lem Prelim 1} and \ref{Lem Prelim 2}}\label{Sec Proof Lem Prelim}

\begin{proof}[Proof of Lemma \ref{Lem Prelim 1}]
Since $k_z(x,y)$ is analytic in $z,$ one has
\begin{align*}
|k_{e^{i\theta}t_0 + t}(x,y) - k_{e^{i\theta}t_0}(x,y)| & \leq \int_0^t \left| \frac{\partial}{\partial s}k_{e^{i\theta}t_0 + s}(x,y) \right| ds \\
& \leq \int_0^t \frac{1}{2\pi} \int_{\Gamma_{e^{i\theta}t_0+s}} \left|\frac{k_z(x,y)}{(z-e^{i\theta}t_0 - s)^2}\right| dz ds,
\end{align*}
where $\Gamma_{e^{i\theta}t_0+s}$ is the contour of the circle centered at $e^{i\theta}t_0 + s$ with radius $r (\cos(\theta)t_0+s),$
where $r>0$ is some small constant determined later on in this proof.
The rest of the proof is the rather long task to exploit \eqref{Equ add prop semigroup 1} in the above double integral.
Let $z \in \Gamma_{e^{i\theta}t_0 + s}.$
Then $\cos(\arg z) \cong \min(|\frac{\Re z}{\Im z}|,1) \cong \min(\frac{\cos(\theta)t_0 + s}{\sin(\theta)t_0 +s}, 1) \cong \cos(\arg(e^{i\theta}t_0 +s)) \geq \cos(\theta).$
Also, $|z| \cong |e^{i\theta}t_0 + s|.$
%Thus,
%\begin{align*}
%& \left| \frac{k_z(x,y)}{(z - e^{i\theta}t_0 - s)^2}\right| \lesssim \\
%& \cos(\arg(e^{i\theta}t_0+s))^{-\beta} \mu(B(x,|e^{i\theta}t_0+s|))^{-1} \left| 1 + \frac{\rho(x,y)^2}{z^2} \right|^{-\frac{d+1}{2}} 
%|e^{i\theta}t_0 + s|^{-2}.
%\end{align*}
%We claim that 
%\begin{equation}\label{Equ Lem Prelim 1}
%\left| 1 + \frac{\rho(x,y)^2}{z^2} \right| \gtrsim \left| 1 + \frac{\rho(x,y)^2}{(e^{i\theta}t_0 + s)^2}\right|.
%\end{equation}
%To prove \eqref{Equ Lem Prelim 1}, we consider several cases. \\
We divide the integral over $s$ in 
\[\int_0^t \ldots ds = \int_0^{\min(\cos(\theta) t_0,t)} \ldots ds + \int_{\min(\cos(\theta) t_0,t)}^t \ldots ds.\]

\noindent \underline{1st case:} $\cos(\theta) t_0 \geq s.$

We show that

\begin{equation}\label{Equ Lem Prelim 4}
\left| 1 + \frac{\rho(x,y)^2}{[e^{i\theta} t_0 + s + r e^{i\phi} (\cos(\theta) t_0 + s)]^2} \right| \cong \left|1 + \frac{\rho(x,y)^2}{(e^{i\theta}t_0)^2}\right|.
\end{equation}

First note that \eqref{Equ Lem Prelim 4} is equivalent to $|w_l| \cong |w_r|,$ where
$w_l = [e^{i\theta} t_0 + s + r e^{i\phi} (\cos(\theta) t_0 + s)]^2 + \rho(x,y)^2$ and $w_r = (e^{i\theta} t_0)^2 + \rho(x,y)^2.$
We have $\Re w_r = \cos(2\theta) t_0^2 + \rho(x,y)^2 = -t_0^2 + \rho(x,y)^2 + o(\frac{\pi}{2} - |\theta|) t_0^2$ and $|\Im w_r| \cong \cos(\theta) t_0^2.$
This gives $|w_r| \cong |\Re w_r| + |\Im w_r| \cong |\rho(x,y)^2 - t_0^2| + \cos(\theta) t_0^2.$
On the other hand, $|\Im w_l| = 2 (\cos(\theta) t_0 + s)(1 + r \cos(\phi))|\sin(\theta)t_0 + r \sin(\phi)(\cos(\theta)t_0 + s)| \cong \cos(\theta) t_0^2$ and $\Re w_l = \rho(x,y)^2 - t_0^2 + t_0^2 (1 - \sin^2(\theta)) -r^2 \sin^2(\phi) (\cos(\theta) t_0 + s)^2 - 2r \sin(\theta) t_0 \sin(\phi) (\cos(\theta) t_0 + s) + (\cos(\theta) t_0 + s)^2(1 + r \cos(\phi))^2.$
This gives 
\begin{align*}
|\Re w_l| & \geq |\rho(x,y)^2 - t_0^2| - \left[ (\frac{\pi}{2} - |\theta|)^2 + o((\frac{\pi}{2} - |\theta|)^2) \right] t_0^2 \\
&  - 4r^2t_0^2(\frac{\pi}{2} - |\theta|)^2 ( 1 + o(1)) -2r\cdot2\cdot(\frac{\pi}{2} - |\theta|) t_0^2 \\
& \geq |\rho(x,y)^2 - t_0^2| - c_1 (\frac{\pi}{2} - |\theta|) t_0^2,
\end{align*}
where $c_1 << 1$ is some small constant if $r$ is sufficiently small and $|\theta|$ is sufficiently close to $\frac{\pi}{2}.$
Similarly, $|\Re w_l| \leq |\rho(x,y)^2 - t_0^2| + c_1 (\frac{\pi}{2} - |\theta|) t_0^2.$
Thus, $|w_l| \cong |\Re w_l| + |\Im w_l| \cong |\rho(x,y)^2 - t_0^2| + \cos(\theta) t_0^2 \cong |w_r|,$ and \eqref{Equ Lem Prelim 4} follows.
Now we can estimate

\begin{align}
& \int_0^{\min(\cos(\theta) t_0,t)} \left| \frac{\partial}{\partial s} k_{e^{i\theta}t_0 + s}(x,y) \right| ds \lesssim \nonumber \\
& \frac{(\cos(\theta))^{-\beta}}{\mu(B(x,t_0))} \int_0^{\min(\cos(\theta) t_0,t)} \frac{1}{\cos(\theta) t_0} \left\{ |\rho(x,y)^2 - t_0^2| + \cos(\theta) t_0^2 \right\}^{-\frac{d+1}{2}} t_0^{d+1} ds \nonumber \\
& = \frac{(\cos(\theta))^{-\beta}}{\mu(B(x,t_0))} \frac{\min(\cos(\theta)t_0,t)}{\cos(\theta) t_0} \left\{ |1 - \frac{\rho(x,y)^2}{t_0^2}| + \cos(\theta) \right\}^{-\frac{d+1}{2}}. \label{Equ Lem Prelim 5}
\end{align}

%Then \eqref{Equ Lem Prelim 1} follows from $|\Im(z^2 + \rho(x,y)^2)| \gtrsim |(e^{i\theta}t_0 +s)^2 + \rho(x,y)^2|,$
%or, with $z = e^{i\theta}t_0 + s + r e^{i\phi} (\cos(\phi) t_0 + s)$ for some $\phi \in [0,2\pi]$ and the fact that $|w| \cong |\Re w| + |\Im w|$ for any $w \in \C,$ it follows from
%$2 (\cos(\theta) t_0 + s)(1 + r \cos(\phi))(\sin(\theta) t_0 + r \sin(\phi) (\cos(\theta) t_0 + s))
%\gtrsim
%|\cos(2 \theta) t_0^2 + \rho(x,y)^2| + |\sin(2 \theta) t_0^2|,$
%where we used $\cos(\theta) t_0 \geq s.$
%This is indeed true, since the left hand side is bigger than a constant times $\cos(\theta)t_0^2,$ and the right hand side can be estimated from above by a constant times
%$|-t_0^2 + \rho(x,y)^2 + 2(\frac{\pi}{2} - |\theta|)^2 t_0^2 + o((\frac{\pi}{2} - |\theta|)^3) t_0^2| + (\frac{\pi}{2} - |\theta|) t_0^2
%\lesssim (\frac{\pi}{2} - |\theta|) t_0^2,$ where we used $|\rho(x,y)^2/t_0^2 -1| \lesssim \frac{\pi}{2} - |\theta|$ in the last step.
%Note that $\cos(\theta) \cong \frac{\pi}{2} - |\theta|.$ \\

\noindent \underline{2nd case:} $\cos(\theta) t_0 \leq s.$

We show that
\begin{equation}\label{Equ Lem Prelim 2}
\left| 1 + \frac{\rho(x,y)^2}{[e^{i\theta} t_0 + s + r e^{i\phi}(\cos(\theta) t_0 + s)]^2} \right| \gtrsim \left| 1 + \frac{\rho(x,y)^2}{(s+it_0)^2} \right|.
\end{equation}
Put $w_l = [e^{i\theta} t_0 + s + r e^{i\phi}(\cos(\theta)t_0 + s)]^2 + \rho(x,y)^2$ and $w_r = [s+it_0]^2 + \rho(x,y)^2,$
so that \eqref{Equ Lem Prelim 2} $\Longleftrightarrow |w_l| \gtrsim |w_r|.$
We have $\Re w_r = s^2 - t_0^2 + \rho(x,y)^2$ and $\Im w_r = 2s t_0.$
On the other hand, $\Re w_l = \left[ \cos(\theta) t_0 + s + r \cos(\phi) (\cos(\theta) t_0 + s)\right]^2 - \left[\sin(\theta) t_0 + r \sin(\phi) (\cos(\theta) t_0 +s)\right]^2 + \rho(x,y)^2,$ and
\begin{align*}
|\Im w_l| & = 2 \left|\left[\cos(\theta) t_0 + s + r \cos(\phi)(\cos(\theta) t_0 + s)\right]\right. \times \\
& \times \left.\left[ \sin(\theta) t_0 + r \sin(\phi) (\cos(\theta) t_0 + s)\right]\right| \cong s t_0,
\end{align*}
if $r < \frac14.$
This gives
\begin{align*}
\Re w_l & = \cos^2(\theta) t_0^2 + s^2 + r^2 \cos^2(\phi)(\cos(\theta) t_0 + s)^2 + 2s \cos(\theta) t_0 \\
& + 2 \cdot r (\cos(\theta) t_0 + s) \cos(\phi) (\cos(\theta) t_0 + s) - \sin^2(\theta) t_0^2 \\
& - r^2 \sin^2(\phi) (\cos(\theta) t_0 + s) - 2 \cdot r \sin(\theta) t_0 \sin(\phi) (\cos(\theta) t_0 + s) + \rho(x,y)^2.
\end{align*}
Now we distinguish the two cases that $st_0$ is bigger or smaller than $|s^2 - t_0^2 + \rho(x,y)^2|.$
In the first case, we have $|w_l| \geq |\Im w_l| \cong st_0 \gtrsim |\Re w_r| + |\Im w_r| \cong |w_r|.$
In the second case, we have with constants $c_1<1$ arbitrarily close to $1$ and $c_2>0$ arbitrarily close to $0$ if $r$ is sufficiently small, and $c_3>0$ such that $(\cos(\theta))^{-1}\leq c_3 (\frac{\pi}{2} - |\theta|)^{-1}$
\begin{align*}
|\Re w_l| & \geq |c_1 s^2 + \cos(2\theta) t_0^2 + \rho(x,y)^2| - c_2 t_0 s \\
& \geq |s^2 - t_0^2 + \rho(x,y)^2| - (1-c_1)s^2 - (1+\cos(2\theta)) t_0^2 - c_2 t_0 s \\
& \geq |s^2 - t_0^2 + \rho(x,y)^2| - (1-c_1)s^2 - (2(\frac{\pi}{2}-|\theta|)^2 + o((\frac{\pi}{2}-|\theta|)^3)) \times \\
& \times (\cos(\theta))^{-1}c_3s t_0 -c_2 t_0s \\
& \geq |s^2 - t_0^2 + \rho(x,y)^2| - st_0 [(1-c_1) + 2c_3(\frac{\pi}{2} - |\theta|) + o((\frac{\pi}{2}-|\theta|)^2) + c_2] \\
& \gtrsim |s^2 - t_0^2 + \rho(x,y)^2|.
\end{align*}
Thus, $|w_l| \geq |\Re w_l| \gtrsim |s^2 - t_0^2 + \rho(x,y)^2| + st_0 \gtrsim |\Re w_r| + |\Im w_r| \cong |w_r|.$
This shows \eqref{Equ Lem Prelim 2}.

It follows that if $\cos(\theta)t_0 \leq t,$ then 
\begin{equation}\label{Equ Lem Prelim 3}
\int_{\cos(\theta) t_0}^t \left| \frac{\partial}{\partial s}k_{e^{i\theta}t_0 + s}(x,y) \right| ds \lesssim \frac{(\cos(\theta))^{-\beta}}{\mu(B(x,t_0))} \int_{\cos(\theta) t_0}^t \frac{1}{s} \left| 1 + \frac{\rho(x,y)^2}{(s+it_0)^2} \right|^{-\frac{d+1}{2}} ds.
\end{equation}
We have 
\begin{align*}
\left| 1 + \frac{\rho(x,y)^2}{(s+i t_0)^2} \right|^{-\frac{d+1}{2}} & \cong |s + i t_0|^{d+1} |(s+it_0)^2 + \rho(x,y)^2|^{-\frac{d+1}{2}} \\
& \cong t_0^{d+1} \left[ \max(|s^2  - t_0^2 + \rho(x,y)^2|,2st_0)\right]^{-\frac{d+1}{2}}.
\end{align*}
Next we determine the value of the above maximum of two terms.
A simple calculation shows that the two terms are the same iff $s$ takes one of the four values $s_{\pm,\pm} =\pm t_0 \pm \sqrt{2 t_0^2 - \rho(x,y)^2}.$
We distinguish three cases i) - iii). \\

\noindent\underline{Case i)} $t_0^2 \geq \rho(x,y)^2.$

Then out of $s_{\pm,\pm},$ only $s_{-,+}= -t_0 + \sqrt{2t_0^2 - \rho(x,y)^2}$ lies in $[0,t_0].$
If $s \leq s_{-,+},$ then $|s^2 - t_0^2 + \rho(x,y)^2| \geq 2st_0$ and if $s \geq s_{-,+},$ then $|s^2 - t_0^2 + \rho(x,y)^2| \leq 2st_0.$
Now divide the integral in \eqref{Equ Lem Prelim 3} accordingly.
We get
\begin{align}
& \int_{\cos(\theta) t_0}^t  \left| \frac{\partial}{\partial s} k_{e^{i\theta}t_0 +s}(x,y)\right| ds \lesssim \nonumber \\
& \frac{(\cos(\theta))^{-\beta}}{\mu(B(x,t_0))} \int_{\cos(\theta) t_0}^{t_0 (\sqrt{2-\rho(x,y)^2/t_0^2}-1)} \frac{1}{s} t_0^{d+1} |s^2 - t_0^2 + \rho(x,y)^2|^{-\frac{d+1}{2}} ds  \nonumber \\
& + \frac{(\cos(\theta))^{-\beta}}{\mu(B(x,t_0))} \int_{t_0(\sqrt{2-\rho(x,y)^2/t_0^2}-1)}^t \frac{1}{s} t_0^{d+1} (st_0)^{-\frac{d+1}{2}} ds \nonumber \\
& \cong  \frac{(\cos(\theta))^{-\beta}}{\mu(B(x,t_0))} \left\{ \int_{\cos(\theta) t_0}^{t_0 (\sqrt{2-\rho(x,y)^2/t_0^2}-1)} \frac{1}{s} [(\sqrt{t_0^2 - \rho(x,y)^2} - s) \times \right. \nonumber \\
& \left. \times (\sqrt{t_0^2 - \rho(x,y)^2} + s)]^{-\frac{d+1}{2}} t_0^{d+1} ds + 
\left(\sqrt{2 - \frac{\rho(x,y)^2}{t_0^2}} - 1 \right)^{-\frac{d+1}{2}} - (t_0/t)^{\frac{d+1}{2}} \right\}. \label{Equ Lem Prelim 6}
\end{align}

The last integral above, with lower and upper bound abbreviated by $a$ and $b,$ can be further estimated by

\begin{align*}
& \int_a^b \frac{1}{s} t_0^{d+1} (t_0^2 - \rho(x,y)^2)^{-\frac{d+1}{4}} \left[ \frac{1}{(\sqrt{t_0^2 - \rho(x,y)^2}-s)^{\frac{d+1}{2}}}
+ \frac{1}{(\sqrt{t_0^2 - \rho(x,y)^2} +s )^{\frac{d+1}{2}}} \right] ds \\
& \lesssim \int_{a/t_0}^{b/t_0} \left( 1 - \frac{\rho(x,y)^2}{t_0^2} \right)^{-\frac{d+1}{4}}  \frac{1}{(\sqrt{1 - \frac{\rho(x,y)^2}{t_0^2}} - s)^{\frac{d+1}{2}}} \frac{ds}{s}.
\end{align*}

\noindent\underline{Case ia)} $\frac{1}{\sqrt{2 - \rho(x,y)^2/t_0^2} - 1} \geq 2 t_0/t.$

Then the term after the integral in \eqref{Equ Lem Prelim 6} can be simplified to
\begin{align*}
\left(\sqrt{2 - \frac{\rho(x,y)^2}{t_0^2}}-1 \right)^{-\frac{d+1}{2}} \cong \left(1 - \frac{\rho(x,y)^2}{t_0^2}\right)^{-\frac{d+1}{2}} \cong \left(1-\frac{\rho(x,y)}{t_0}\right)^{-\frac{d+1}{2}}.
\end{align*}

\noindent\underline{Case ib)} $\frac{1}{\sqrt{2 - \rho(x,y)^2/t_0^2} - 1} \leq 2 t_0/t.$

Then we obtain

\begin{align}
& \int_{\cos(\theta)t_0}^t \left| \frac{\partial}{\partial s}k_{e^{i\theta}t_0 + s}(x,y) \right| ds \lesssim \nonumber \\
& \frac{(\cos(\theta))^{-\beta}}{\mu(B(x,t_0))} \left\{ ( 1 - \frac{\rho(x,y)^2}{t_0^2} )^{-\frac{d+1}{4}} \left[ \left(\sqrt{1 - \frac{\rho(x,y)^2}{t_0^2}}\right)^{-\frac{d+1}{2}} \times \right.\right. \nonumber \\
& \times \log\left(\frac{\frac12[\sqrt{2-\rho(x,y)^2/t_0^2}-1 + \cos(\theta)]}{\cos(\theta)}\right)
+ [\sqrt{2 - \rho(x,y)^2/t_0^2} -1 + \cos(\theta)]^{-1} \times \nonumber \\
& \left[\left(\sqrt{1-\frac{\rho(x,y)^2}{t_0^2}} - \sqrt{2 - \frac{\rho(x,y)^2}{t_0^2}} + 1\right)^{-\frac{d-1}{2}} \right. \nonumber \\
& \left.\left.- \left(\sqrt{1 - \frac{\rho(x,y)^2}{t_0^2}} - \frac12 \sqrt{2 - \frac{\rho(x,y)^2}{t_0^2}} + \frac12 - \frac{\cos(\theta)}{2} \right)^{-\frac{d-1}{2}} \right] \right] \nonumber \\
& \left.+ \left(\frac{t}{t_0} - \sqrt{2 - \frac{\rho(x,y)^2}{t_0^2}} + 1 \right) \left(1 - \frac{\rho(x,y)^2}{t_0^2} \right)^{-\frac{d+3}{2}} \right\}. \label{Equ Lem Prelim 7}
\end{align}

Hereby, the last summand only exists for $\sqrt{2 - \rho(x,y)^2/t_0^2} -1 \leq t/t_0$ and if $\rho(x,y)/t_0$ is so close to $1$ that $\sqrt{2 - \rho(x,y)^2/t_0^2} -1 \leq \cos(\theta)$ then $\sqrt{2 - \rho(x,y)^2/t_0^2} -1$ in the last summand has to be replaced by $\cos(\theta).$
Furthermore, everything before the last summand has to be replaced by $0$ if $\cos(\theta) \geq \sqrt{2 - \rho(x,y)^2/t_0^2} -1.$\\

\noindent\underline{Case ii)} $t_0^2 \leq \rho(x,y)^2 \leq 2 t_0^2.$

Then only $s_{+,-}$ lies in $[0,t_0].$
If $s \leq s_{+,-},$ then $|s^2 - t_0^2 + \rho(x,y)^2| \geq 2st_0$ and if $s \geq s_{+,-},$ then $|s^2 - t_0^2 + \rho(x,y)^2| \leq 2st_0.$
Then we obtain

\begin{align}
& \int_{\cos(\theta)t_0}^t \left| \frac{\partial}{\partial s}k_{e^{i\theta}t_0 + s}(x,y) \right| ds \lesssim \nonumber \\
& \frac{(\cos(\theta))^{-\beta}}{\mu(B(x,t_0))} \left[ \int_{\cos(\theta)t_0}^{t_0 (1 - \sqrt{2 - \rho(x,y)^2/t_0^2})} [s^2 - t_0^2 + \rho(x,y)^2]^{-\frac{d+1}{2}} t_0^{d+1} \frac{ds}{s} \right. \nonumber \\
& \left. + \int_{t_0 (1 - \sqrt{2 - \rho(x,y)^2/t_0^2})}^t t_0^{d+1} (st_0)^{-\frac{d+1}{2}} \frac{ds}{s} \right] \nonumber \\
& \cong \frac{(\cos(\theta))^{-\beta}}{\mu(B(x,t_0))} \left\{ \left[\log\frac{1 -\sqrt{2 - \rho(x,y)^2/t_0^2}}{\cos(\theta)}\right]_+\left(\frac{\rho(x,y)^2}{t_0^2} - 1 \right)^{-\frac{d+1}{2}} \right. \nonumber \\
&\left. + \left[ \left(1 - \sqrt{2 - \rho(x,y)^2/t_0^2} \right)^{-\frac{d+1}{2}} - (t/t_0)^{-\frac{d+1}{2}} \right]_+ \right\}, \label{Equ Lem Prelim 8}
\end{align}
where $[x]_+ = \max(x,0).$
Note that in the last expression above, in the first summand, $1 -  \sqrt{2-\rho(x,y)^2/t_0^2}$ has to be replaced by $t/t_0$ if
$t/t_0 \leq 1 - \sqrt{2-\rho(x,y)^2/t_0^2}.$
In the second summand, $1 - \sqrt{2-\rho(x,y)^2/t_0^2}$ has to be replaced by $\cos(\theta)$ if
$1 - \sqrt{2 - \rho(x,y)^2/t_0^2} \leq \cos(\theta).$ \\

\noindent\underline{Case iii)} $\rho(x,y)^2 \geq 2 t_0^2.$

Then none of $s_{\pm,\pm}$ is real.
We always have $|s^2 - t_0^2 + \rho(x,y)^2| \geq 2st_0.$
Then we obtain

\begin{align}
& \int_{\cos(\theta)t_0}^t \left| \frac{\partial}{\partial s}k_{e^{i\theta}t_0 + s}(x,y) \right| ds \lesssim \frac{(\cos(\theta))^{-\beta}}{\mu(B(x,t_0))} \int_{\cos(\theta) t_0}^t t_0^{d+1} |s^2 - t_0^2 + \rho(x,y)^2|^{-\frac{d+1}{2}} \frac{ds}{s} \nonumber \\
& \lesssim \frac{(\cos(\theta))^{-\beta}}{\mu(B(x,t_0))} \left[ \log\left(\frac{t}{t_0 \cos(\theta)}\right) \right]_+ \left(\frac{\rho(x,y)^2}{t_0^2} - 1\right)^{-\frac{d+1}{2}}. \label{Equ Lem Prelim 9}
\end{align}

Summarizing \eqref{Equ Lem Prelim 5}, \eqref{Equ Lem Prelim 6}, \eqref{Equ Lem Prelim 7}, \eqref{Equ Lem Prelim 8}, \eqref{Equ Lem Prelim 9}, we finally obtain the claimed estimate of the lemma.
\end{proof}

\begin{proof}[Proof of Lemma \ref{Lem Prelim 2}]
We start with estimating $K := |k_{e^{i\theta} 2^j t_0 + t}(x,y) - k_{e^{i\theta} 2^j t_0}(x,y)|$ for a fixed $j \in \Z$ such that $2^jt_0 \geq t,$
hereby using Lemma \ref{Lem Prelim 1}.
Write in short $R = \rho(x,y)^2 / (2^j t_0)^2.$
We distinguish the following fifteen cases Iai), Iaii), Iaiii), Iaiv), Iav), Ibi), Ibii), Ic), Idi), Idii), Idiii), Ie), IIa), IIb+c), IId+e),
which depend on the values of $\theta,2^j t_0,t$ and $\rho(x,y).$
Here, case I stands for $\cos(\theta) 2^j t_0 \leq t,$ case II stands for $\cos(\theta) 2^j t_0 \geq t,$ case a stands for $R \leq 1 - \cos(\theta),$ b for $1 - \cos(\theta) \leq R \leq 1,$ c for $1 \leq R \leq 1 + \cos(\theta),$ d for $1 + \cos(\theta) \leq R \leq 2$ and e for $2 \leq R.$\\

\noindent\underline{Case Iai)} $\cos(\theta) 2^j t_0 \leq t,\: R \leq 1 - \cos(\theta)$ and $\cos(\theta) \leq \sqrt{2 - R} - 1 \leq \frac12 \frac{t}{2^j t_0}.$

Then with Lemma \ref{Lem Prelim 1}, 
\begin{align*}
K & \lesssim \frac{(\cos(\theta))^{-\beta}}{\mu(B(x,2^j t_0))} \left\{ (1 - R)^{-\frac{d+1}{2}} \left( 1 + \log\left( \frac{\frac12 [\sqrt{2 -R} -1 + \cos(\theta)]}{\cos(\theta)} \right) \right) \right. \\
& + [\sqrt{2-R}-1 + \cos(\theta)]^{-1} \left[(\sqrt{1-R} - \sqrt{2-R}+1)^{-\frac{d-1}{2}} \right.\\
& \left.\left. - \left(\sqrt{1-R}-\frac12 \sqrt{2-R} + \frac12 -\frac{\cos(\theta)}{2}\right)^{-\frac{d-1}{2}}\right] ( 1 - R)^{-\frac{d+1}{4}} + (1-R)^{-\frac{d+1}{2}} \right\}.
\end{align*}

\noindent\underline{Case Iaii)} $\cos(\theta) 2^j t_0 \leq t,\: R \leq 1 - \cos(\theta)$ and $\max(\cos(\theta),\frac12 \frac{t}{2^j t_0}) \leq \sqrt{2 - R} - 1 \leq \frac{t}{2^j t_0}.$

\begin{align*}
K & \lesssim \frac{(\cos(\theta))^{-\beta}}{\mu(B(x,2^j t_0))} \left\{ (1-R)^{-\frac{d+1}{2}} \left( 1 + \log\left( \frac{\frac12[\sqrt{2-R} -1 + \cos(\theta)]}{\cos(\theta)}\right)\right) \right. \\
& +(1-R)^{-\frac{d+1}{4}-1} \left[ (\sqrt{1-R}-\sqrt{2-R}+1)^{-\frac{d-1}{2}} \right.\\
& \left. - \left(\sqrt{1-R} - \frac12 \sqrt{2-R} + \frac12 - \frac{\cos(\theta)}{2}\right)^{-\frac{d-1}{2}} \right] \\
& \left.+ \left(\frac{t}{2^j t_0} - \sqrt{2-R} +1\right)(1-R)^{-\frac{d+3}{2}} \right\}
\end{align*}

\noindent\underline{Case Iaiii)} $\cos(\theta) 2^j t_0 \leq t,\: R \leq 1 - \cos(\theta)$ and $\max(\cos(\theta),\frac{t}{2^jt_0}) = \frac{t}{2^j t_0} \leq \sqrt{2-R} -1.$

\begin{align*}
K & \lesssim \frac{(\cos(\theta))^{-\beta}}{\mu(B(x,2^j t_0))} \left\{ (1-R)^{-\frac{d+1}{2}} \left( 1 + \log\left( \frac{\frac12[\sqrt{2-R}-1 + \cos(\theta)]}{\cos(\theta)}\right)\right) \right. \\
& + (1-R)^{-\frac{d+1}{4}-1} \left[ (\sqrt{1-R} - \sqrt{2-R} + 1)^{-\frac{d-1}{2}} \right. \\
& \left.\left.- \left( \sqrt{1-R} - \frac12 \sqrt{2-R} + \frac12 - \frac{\cos(\theta)}{2} \right)^{-\frac{d-1}{2}} \right] \right\}.
\end{align*}

\noindent\underline{Case Iaiv)} $\cos(\theta) 2^j t_0 \leq t,\: R \leq 1 - \cos(\theta)$ and $\frac12 \frac{t}{2^j t_0} \leq \sqrt{2-R} -1  \leq \cos(\theta).$

\begin{align*}
K & \lesssim \frac{(\cos(\theta))^{-\beta}}{\mu(B(x,2^j t_0))} \left\{ (1-R)^{-\frac{d+1}{2}} + \left( \frac{t}{2^j t_0} - \sqrt{2-R} +1\right) (1-R)^{-\frac{d+3}{2}} \right\}
\end{align*}

\noindent\underline{Case Iav)} $\cos(\theta) 2^j t_0 \leq t,\: R \leq 1 - \cos(\theta)$ and $\sqrt{2-R}-1 \leq \min(\cos(\theta),\frac12 \frac{t}{2^jt_0}).$

\begin{align*}
K & \lesssim \frac{(\cos(\theta))^{-\beta}}{\mu(B(x,2^j t_0))} (1-R)^{-\frac{d+1}{2}}
\end{align*}

\noindent\underline{Case Ib)} $\cos(\theta) 2^j t_0 \leq t$ and $1 - \cos(\theta) \leq R \leq 1.$

In this case, we have $\sqrt{2-R} -1 \leq \frac12 \cos(\theta) + o(\cos(\theta)),$ so that always $\sqrt{2-R} - 1 \leq \cos(\theta)$ if $|\theta|$ is sufficiently close to $\frac{\pi}{2}.$

\noindent\underline{Case Ibi)} $\cos(\theta) 2^j t_0 \leq t,\:1 - \cos(\theta) \leq R \leq 1$ and $\frac12 \frac{t}{2^j t_0} \leq \sqrt{2-R}-1.$

\begin{align*}
K & \lesssim \frac{(\cos(\theta))^{-\beta}}{\mu(B(x,2^j t_0))} \left\{ (\cos(\theta))^{-\frac{d+1}{2}} + (\cos(\theta))^{-\frac{d+1}{2}} - \left(\frac{t}{2^j t_0} \right)^{-\frac{d+1}{2}} \right\} \\
& \lesssim  \frac{(\cos(\theta))^{-\beta}}{\mu(B(x,2^j t_0))} (\cos(\theta))^{-\frac{d+1}{2}}
\end{align*}

\noindent\underline{Case Ibii)} $\cos(\theta) 2^j t_0 \leq t,\:1 - \cos(\theta) \leq R \leq 1$ and $\sqrt{2-R} -1 \leq \frac12 \frac{t}{2^jt_0}.$

\begin{align*}
K & \lesssim \frac{(\cos(\theta))^{-\beta}}{\mu(B(x,2^j t_0))} (\cos(\theta))^{-\frac{d+1}{2}}
\end{align*}

\noindent\underline{Case Ic)} $\cos(\theta) 2^j t_0 \leq t,\: 1 \leq R \leq 1 + \cos(\theta).$

In this case, we have $0 \leq 1 - \sqrt{2-R} \leq \frac12 \cos(\theta) + o(\cos(\theta)).$ 
Then

\begin{align*}
K & \lesssim \frac{(\cos(\theta))^{-\beta}}{\mu(B(x,2^j t_0))} \left\{ (\cos(\theta))^{-\frac{d+1}{2}} + (\cos(\theta))^{-\frac{d+1}{2}} - \left(\frac{t}{2^j t_0} \right)^{-\frac{d+1}{2}} \right\} \\
& \lesssim  \frac{(\cos(\theta))^{-\beta}}{\mu(B(x,2^j t_0))} (\cos(\theta))^{-\frac{d+1}{2}}
\end{align*}

\noindent\underline{Case Id)} $\cos(\theta) 2^j t_0 \leq t$ and $1 + \cos(\theta) \leq R \leq 2.$

In this case, $-1 \leq 1 - R \leq - \cos(\theta)$, so that $1 - (R-1) \leq \sqrt{1 + (1-R)} \leq 1 - \frac12(R-1)$ and thus $\frac12 (R-1) \leq 1 - \sqrt{2-R} \leq R-1.$

\noindent\underline{Case Idi)} $\cos(\theta) 2^j t_0 \leq t,\:1 + \cos(\theta) \leq R \leq 2$ and $\cos(\theta) \leq 1 - \sqrt{2-R} \leq \frac{t}{2^j t_0}.$

\begin{align*}
K & \lesssim \frac{(\cos(\theta))^{-\beta}}{\mu(B(x,2^j t_0))} \left\{(R-1)^{-\frac{d+1}{2}} \left(1 + \log\left( \frac{1 - \sqrt{2-R}}{\cos(\theta)}\right)\right) \right. \\
& \left.+ (1-\sqrt{2-R})^{-\frac{d+1}{2}} - \left(\frac{t}{2^j t_0}\right)^{-\frac{d+1}{2}} \right\} \\
& \cong \frac{(\cos(\theta))^{-\beta}}{\mu(B(x,2^j t_0))} (R-1)^{-\frac{d+1}{2}} \left(1 + \log\left(\frac{R-1}{\cos(\theta)}\right) \right)
\end{align*}

\noindent\underline{Case Idii)} $\cos(\theta) 2^j t_0 \leq t,\: 1 + \cos(\theta) \leq R \leq 2$ and $\max(\cos(\theta),\frac{t}{2^j t_0}) = \frac{t}{2^j t_0} \leq 1 - \sqrt{2-R}.$

\begin{align*}
K & \lesssim \frac{(\cos(\theta))^{-\beta}}{\mu(B(x,2^j t_0))} (R-1)^{-\frac{d+1}{2}} \left( 1 + \log\left( \frac{t}{2^jt_0 \cos(\theta)} \right) \right)
\end{align*}

\noindent\underline{Case Idiii)} $\cos(\theta) 2^j t_0 \leq t,\: 1 + \cos(\theta) \leq R \leq 2$ and $1 - \sqrt{2-R} \leq \cos(\theta).$

\begin{align*}
K & \lesssim \frac{(\cos(\theta))^{-\beta}}{\mu(B(x,2^j t_0))} \left\{ (R-1)^{-\frac{d+1}{2}} + (\cos(\theta))^{-\frac{d+1}{2}} - \left( \frac{t}{2^j t_0} \right)^{-\frac{d+1}{2}} \right\} \\
& \cong \frac{(\cos(\theta))^{-\beta}}{\mu(B(x,2^j t_0))} (\cos(\theta))^{-\frac{d+1}{2}}
\end{align*}

\noindent\underline{Case Ie)} $\cos(\theta) 2^j t_0 \leq t$ and $R \geq 2.$

\begin{align*}
K & \lesssim \frac{(\cos(\theta))^{-\beta}}{\mu(B(x,2^j t_0))} (R-1)^{-\frac{d+1}{2}} \left( 1 + \log\left( \frac{t}{2^j t_0 \cos(\theta)} \right) \right)
\end{align*}

\noindent\underline{Case IIa)} $\cos(\theta) 2^j t_0 \geq t$ and $R \leq 1 - \cos(\theta).$

\begin{align*}
K & \lesssim \frac{(\cos(\theta))^{-\beta}}{\mu(B(x,2^j t_0))} \frac{t}{\cos(\theta) 2^j t_0} (1-R)^{-\frac{d+1}{2}}
\end{align*}

\noindent\underline{Case IIb+c)} $\cos(\theta) 2^j t_0 \geq t$ and $1 - \cos(\theta) \leq R \leq 1 + \cos(\theta).$

\begin{align*}
K & \lesssim \frac{(\cos(\theta))^{-\beta}}{\mu(B(x,2^j t_0))} \frac{t}{\cos(\theta) 2^j t_0} (\cos(\theta))^{-\frac{d+1}{2}}
\end{align*}

\noindent\underline{Case IId+e)} $\cos(\theta) 2^j t_0 \geq t$ and $R \geq 1 + \cos(\theta).$

\begin{align*}
K & \lesssim \frac{(\cos(\theta))^{-\beta}}{\mu(B(x,2^j t_0))} \frac{t}{\cos(\theta) 2^j t_0} (R-1)^{-\frac{d+1}{2}}
\end{align*}

Now we want to prove part 1. of the lemma, i.e. to estimate 
\begin{equation}\label{Equ Lem Prelim 10}
\sup_{j \in \Z:\: 2^j t_0 \geq t} |k_{e^{i\theta} 2^j t_0 + t}(x,y) - k_{e^{i\theta} 2^j t_0}(x,y)|
\end{equation}
in the case that $|\rho(x,y)^2/(2^l t_0)^2 - 1| \leq \cos(\theta)$ for some $l \leq L_{\max}.$
We claim that in \eqref{Equ Lem Prelim 10}, the supremum is essentially attained for $j = l,$ more precisely, that \eqref{Equ Lem Prelim 10} can be estimated by the above estimate for $j=l$ (which has to be in one of the four cases I or II, b or c).

For $j < l,$ we have $\frac{\rho(x,y)^2}{(2^j t_0)^2} = \frac{\rho(x,y)^2}{(2^l t_0)^2} 2^{2l - 2j} \geq (1-\cos(\theta))\cdot 4,$ so that case e applies.
We note again $R = \frac{\rho(x,y)^2}{(2^j t_0)^2},$ and moreover $K_j =  |k_{e^{i\theta} 2^j t_0 + t}(x,y) - k_{e^{i\theta} 2^j t_0}(x,y)|,$ and $M_l$ the right hand side of the estimate obtained for $K_l.$
Note that since $j < l < L_{\max},$ we have $\mu(B(x,2^j t_0)) \cong (2^j t_0)^d$ by \eqref{Equ add prop space 1}.
For $\cos(\theta) 2^j t_0 \leq t,$ we have by case Ie), 
\begin{align*}
K_j & \lesssim \frac{(\cos(\theta))^{-\beta}}{\mu(B(x,2^j t_0))} (R-1)^{-\frac{d+1}{2}} \left( 1 + \log\left( \frac{t}{2^j t_0 \cos(\theta)} \right) \right) \\
& \lesssim \frac{(\cos(\theta))^{-\beta}}{(2^j t_0)^d} \frac{(2^j t_0)^{d+1}}{\rho(x,y)^{d+1}} \left( 1 + \log\left( \frac{t}{2^j t_0 \cos(\theta)} \right) \right) \\
& \cong (\cos(\theta))^{-\beta} 2^{j-l} (2^l t_0)^{-d} \left( 1 + \log\left( \frac{t}{2^j t_0 \cos(\theta) } \right) \right).
\end{align*}
If $\cos(\theta) 2^j t_0 \leq t \leq \cos(\theta) 2^l t_0,$ then we have $K_j \lesssim M_l \Longleftarrow 2^{j-l}(1 + \log \frac{t}{2^j t_0 \cos(\theta)}) \lesssim \frac{t}{t_0} 2^{-l} (\cos(\theta))^{-\frac{d+3}{2}} \Longleftrightarrow 2^j t_0 \cos(\theta)/t ( 1+  \log \frac{t}{2^j t_0} + |\log \cos(\theta)|) \lesssim ( \cos(\theta) )^{-\frac{d+1}{2}},$ which is true, since $2^j t_0 \cos(\theta)/t \leq 1$ and $\log\frac{t}{2^j t_0} \leq 0.$
If $\cos(\theta) 2^l t_0 \leq t,$ then we have $K_j \lesssim M_l \Longleftarrow 2^{j-l} ( 1 + \log \frac{t}{2^j t_0 \cos(\theta)} ) \lesssim (\cos(\theta))^{-\frac{d+1}{2}} \Longleftrightarrow 2^{j-l} ( 1 + \log \frac{t}{2^j t_0} + |\log \cos(\theta)|) \lesssim (\cos(\theta))^{-\frac{d+1}{2}},$ which is true, since $2^{j-l} \leq 1$ and $\log\frac{t}{2^j t_0} \leq 0.$

For $\cos(\theta) 2^j t_0 \geq t,$ we have by case IIe),
\begin{align*}
K_j & \lesssim \frac{(\cos(\theta))^{-\beta}}{\mu(B(x,2^j t_0))} \frac{t}{\cos(\theta) 2^j t_0} (R-1)^{-\frac{d+1}{2}} \\
& \cong \frac{(\cos(\theta))^{-\beta}}{(2^j t_0)^d} \frac{t}{\cos(\theta) 2^j t_0} \frac{(2^j t_0)^{d+1}}{\rho(x,y)^{d+1}} \\
& \cong \frac{(\cos(\theta))^{-\beta}}{\mu(B(x,2^l t_0))} \frac{t}{\cos(\theta) 2^l t_0}.
\end{align*}
This is indeed majorized by $M_l,$ since $\cos(\theta) 2^l t_0 \geq t,$ and thus, 
\[M_l = \frac{(\cos(\theta))^{-\beta}}{\mu(B(x,2^lt_0))} \frac{t}{\cos(\theta) 2^l t_0} (\cos(\theta))^{-\frac{d+1}{2}}.\]

For $j > l,$ we have $\frac{\rho(x,y)^2}{(2^j t_0)^2} = \frac{\rho(x,y)^2}{(2^l t_0)^2} \cdot 2^{2l - 2j} \leq (1 + \cos(\theta))\cdot \frac14,$ so that case a applies.
We have $1-R \cong 1,\: \sqrt{2-R}-1 \cong 1,\: \sqrt{1-R} - (\sqrt{2-R}-1) \cong 1,\;\sqrt{1-R} - \frac12 \sqrt{2-R} + \frac12 - \frac{\cos(\theta)}{2} \cong 1.$
Thus, in case Iai),
\begin{align*}
K_j & \lesssim \frac{(\cos(\theta))^{-\beta}}{\mu(B(x,2^j t_0))}(1 + |\log\cos(\theta)|) \\
& \lesssim \frac{(\cos(\theta))^{-\beta}}{\mu(B(x,2^l t_0))} (\cos(\theta))^{-\frac{d+1}{2}} \cong M_l.
\end{align*}
In case Iaii),
\begin{align*}
K_j & \lesssim \frac{(\cos(\theta))^{-\beta}}{\mu(B(x,2^j t_0))}(1 + |\log\cos(\theta)| + \frac{t}{2^j t_0} - (\sqrt{2-R}-1)) \\
& \lesssim \frac{(\cos(\theta))^{-\beta}}{\mu(B(x,2^l t_0))} (\cos(\theta))^{-\frac{d+1}{2}} \cong M_l.
\end{align*}
In case Iaiii), $K_j \lesssim \frac{(\cos(\theta))^{-\beta}}{\mu(B(x,2^j t_0))} (1 + |\log\cos(\theta)|) \lesssim M_l.$

\noindent Cases Iaiv) and Iav) cannot appear here, since $\sqrt{2-R}-1 > \cos(\theta).$

\noindent In case IIa), 
\begin{align*}
K_j & \lesssim \frac{(\cos(\theta))^{-\beta}}{\mu(B(x,2^j t_0))} \frac{t}{\cos(\theta) 2^j t_0} \cdot 1 \\
& \lesssim \frac{(\cos(\theta))^{-\beta}}{\mu(B(x,2^l t_0))} \min(1,\frac{t}{\cos(\theta) 2^l t_0}) (\cos(\theta))^{-\frac{d+1}{2}} \cong M_l.
\end{align*}

Looking up $M_l$ in the four cases I or II, b or c, now yields part 1. of the lemma.\\

Now for the proof of part 2.
Suppose first that $1 + \cos(\theta) \leq \frac{\rho(x,y)^2}{(2^l t_0)^2} \leq 2.$
Then $M_l$ is given by one of the four cases Idi),Idii),Idiii) and IId).
We will show that $\sup_{j \in \Z:\:2^j t_0 \geq t} K_j \lesssim M_l + \frac{(\cos(\theta))^{-\beta}}{\mu(B(x,2^lt_0))} ( 1 + |\log\cos(\theta)| ),$
the logarithmic term only appearing in case Idii).

Suppose first that $M_l$ is given by case Idi).
Consider a $j < l.$
Then $\frac{\rho(x,y)^2}{(2^j t_0)^2} = \frac{\rho(x,y)^2}{(2^l t_0)^2} \cdot 2^{2l-2j} \geq 4,$ so that for $K_j,$ case Ie) or IIe) applies.
In fact, one is never in the case IIe), since then $\cos(\theta) 2^l t_0 \leq t,\: \cos(\theta) 2^j t_0 \geq t$ and $j < l.$
In case Ie), we have
\begin{align*}
K_j & \lesssim \frac{(\cos(\theta))^{-\beta}}{\mu(B(x,2^j t_0))} R^{-\frac{d+1}{2}} \left(1 + \log\left( \frac{t}{2^j t_0 \cos(\theta)} \right) \right) \\
& \cong (\cos(\theta))^{-\beta} 2^j t_0 \rho(x,y)^{-(d+1)} \left(1 + \log\left( \frac{t}{2^j t_0 \cos(\theta)} \right) \right) \\
& \cong (\cos(\theta))^{-\beta} 2^{j-l} (2^l t_0)^{-d} \left(1 + \log\left( \frac{t}{2^j t_0 \cos(\theta)} \right) \right).
\end{align*}
On the other hand, 
\begin{align*}
M_l & = \frac{(\cos(\theta))^{-\beta}}{\mu(B(x,2^l t_0))} \left\{ \left(\rho(x,y)^2 / (2^l t_0)^2 - 1\right)^{-\frac{d+1}{2}} \left( 1 + \log\left( \frac{1 - \sqrt{2 - \rho(x,y)^2/(2^l t_0)^2}}{\cos(\theta)} \right)\right) \right. \\
& \left. + \left(1 - \sqrt{2 - \rho(x,y)^2/(2^l t_0)^2}\right)^{-\frac{d+1}{2}} - \left(\frac{t}{2^l t_0}\right)^{-\frac{d+1}{2}} \right\}.
\end{align*}
Now  we have $K_j \lesssim M_l$ if
\[1 + | \log(\cos(\theta)) | \lesssim \left(\rho(x,y)^2/(2^l t_0)^2 -1\right)^{-\frac{d+1}{2}} \left( 1 + \log \frac{\rho(x,y)^2/(2^l t_0)^2 - 1}{\cos(\theta)}\right).\]
An elementary calculation shows that the minimum of the right hand side for $1 + \cos(\theta) \leq \frac{\rho(x,y)^2}{(2^l t_0)^2} \leq 2$ is equivalent to $1 + |\log(\cos(\theta))|.$
Thus $K_j \lesssim M_l$ for $j < l.$
Now consider $j > l.$
Then $\frac{\rho(x,y)^2}{(2^j t_0)^2} = \frac{\rho(x,y)^2}{(2^l t_0)^2} 2^{2l - 2j} \leq \frac12,$ so that case a) applies for $K_j.$
Now we have $1 - R \cong 1,\: \sqrt{2-R}-1\cong1 ,\: \sqrt{1-R} - (\sqrt{2-R}-1) \cong 1,$ and $\sqrt{1-R} - \frac12(\sqrt{2-R}-1) - \frac{\cos(\theta)}{2} \cong 1.$
With this, we obtain easily in case Iai) that
$K_j \lesssim \frac{(\cos(\theta))^{-\beta}}{\mu(B(x,2^j t_0))} ( 1 + |\log(\cos(\theta))| ) \lesssim \frac{(\cos(\theta))^{-\beta}}{\mu(B(x,2^l t_0))} ( 1 + |\log(\cos(\theta))| ) \lesssim M_l.$
In case Iaii), we also have $K_j \lesssim \frac{(\cos(\theta))^{-\beta}}{\mu(B(x,2^j t_0))} ( 1 + |\log(\cos(\theta))| ) \lesssim M_l,$ and the cases Iaiii), Iaiv) and Iav) can be handled in the same way.
In case IIa), we have $K_j \lesssim \frac{(\cos(\theta))^{-\beta}}{\mu(B(x,2^j t_0))} \frac{t}{\cos(\theta) 2^j t_0} \lesssim M_l.$
Thus $\sup_{j: 2^j t_0 \geq t} K_j \lesssim M_l$ if $M_l$ is given by case Idi).\\

Now suppose that $M_l$ is given by case Idii).
Take first a $j < l.$
Then $\rho(x,y)^2/(2^j t_0)^2 \geq 4,$ so that for $K_j,$ case e) applies.
In case Ie), we have
\begin{align*}
K_j & \lesssim \frac{(\cos(\theta))^{-\beta}}{\mu(B(x,2^j t_0))} \left(\frac{\rho(x,y)^2}{(2^j t_0)^2} \right)^{-\frac{d+1}{2}} \left( 1 + \log\frac{t}{2^j t_0 \cos(\theta)} \right) \\
& \lesssim (\cos(\theta))^{-\beta} 2^{j-l} (2^l t_0)^{-d} \left( 1 + \log \frac{t}{2^j t_0 \cos(\theta)} \right) \\
& \lesssim (\cos(\theta))^{-\beta} (2^l t_0)^{-d} \left( 1 + \log\frac{t}{2^l t_0 \cos(\theta)} \right) \cong M_l.
\end{align*}
Again, case IIe) cannot appear since then, $\cos(\theta) 2^l t_0 \leq t,\: \cos(\theta) 2^j t_0 \geq t$ and $j < l.$

Now consider a $j > l.$
We have $\rho(x,y)^2/(2^j t_0)^2 \leq \frac12,$ so that case a) applies for the estimate of $K_j.$
In case Iai), we have $K_j \lesssim \frac{(\cos(\theta))^{-\beta}}{\mu(B(x,2^j t_0))} (1+|\log(\cos(\theta))|) \lesssim \frac{(\cos(\theta))^{-\beta}}{\mu(B(x,2^l t_0))} (1+|\log(\cos(\theta))|).$
The same estimate holds in the cases Iaii) and Iaiii).
In case Iaiv), we have $K_j \lesssim \frac{(\cos(\theta))^{-\beta}}{\mu(B(x,2^j t_0))} \lesssim M_l,$ and similarly, also in the cases, Iav) and IIa), $K_j \lesssim M_l$ holds.
We thus have $\sup_{j: 2^j t_0 \geq t} K_j \lesssim M_l + \frac{(\cos(\theta))^{-\beta}}{\mu(B(x,2^l t_0))}(1+|\log(\cos(\theta))|),$ if $M_l$ is given by case Idii).\\

Suppose now that $M_l$ is given by case Idiii).
Then $M_l \cong \frac{(\cos(\theta))^{-\beta}}{\mu(B(x,2^l t_0))} (\cos(\theta))^{-\frac{d+1}{2}}.$
Consider a $j<l.$
Again, for the estimate of $K_j,$ case e) applies, and case IIe) is ruled out by the constraints on $j$ and $l.$
We have as above
\begin{align*}
K_j & \lesssim (\cos(\theta))^{-\beta} 2^{j-l} (2^l t_0)^{-d} \left( 1 + \log \frac{t}{2^j t_0 \cos(\theta)} \right) \\
& \lesssim \frac{(\cos(\theta))^{-\beta}}{\mu(B(x,2^l t_0))} ( 1 + \log \frac{t}{2^j t_0} + |\log(\cos(\theta))| ) \lesssim M_l.
\end{align*}
Consider now a $j>l.$
Again $\rho(x,y)^2/(2^j t_0)^2 \leq \frac12,$ so that for $K_j,$ case a) applies, and again, several expressions involving $R$ appearing in this case are equivalent to $1.$
This gives in case Iai), Iaii) and Iaiii), $K_j \lesssim \frac{(\cos(\theta))^{-\beta}}{\mu(B(x,2^l t_0))}  (1 + | \log(\cos(\theta))|) \lesssim M_l,$
and in case Iaiv) and Iav), $K_j \lesssim \frac{(\cos(\theta))^{-\beta}}{\mu(B(x,2^l t_0))} \lesssim M_l.$
In case IIa), we have $K_j \lesssim \frac{(\cos(\theta))^{-\beta}}{\mu(B(x,2^l t_0))} \frac{t}{\cos(\theta) 2^j t_0} \lesssim M_l.$
Thus $\sup_{j: 2^j t_0 \geq t} K_j \lesssim M_l$ if $M_l$ is given by case Idiii).\\

Suppose now that $M_l$ is given by case IId).
Then 
\[M_l =  \frac{(\cos(\theta))^{-\beta}}{\mu(B(x,2^l t_0))} \frac{t}{\cos(\theta) 2^l t_0} \left( \frac{\rho(x,y)^2}{(2^l t_0)^2} - 1 \right)^{-\frac{d+1}{2}}.\]
Consider $j < l,$ so that for $K_j,$ the estimate in case e) applies.
In case Ie), we have
\begin{align*}
K_j & \lesssim \frac{(\cos(\theta))^{-\beta}}{\mu(B(x,2^l t_0))} 2^{(l-j)d} \left(\frac{\rho(x,y)^2}{(2^jt_0)^2}\right)^{-\frac{d+1}{2}} \left( 1 + \log\frac{t}{2^j t_0 \cos(\theta)} \right) \\
& \lesssim \frac{(\cos(\theta))^{-\beta}}{\mu(B(x,2^l t_0))} 2^{-l+j} \left( 1 + \log \frac{t}{2^j t_0 \cos(\theta)} \right) \\
& \lesssim \frac{(\cos(\theta))^{-\beta}}{\mu(B(x,2^l t_0))} \frac{t}{\cos(\theta) 2^l t_0} \lesssim M_l.
\end{align*}
In case IIe), we have
\[ K_j \lesssim (\cos(\theta))^{-\beta} (2^j t_0)^{-d} \frac{t}{\cos(\theta) 2^j t_0} \left( \frac{2^l}{2^j} \right)^{-(d+1)} \!\!\!\!\!\!\!\!\!\!\! \cong (2^l t_0)^{-d} 2^{j-l} \frac{t}{\cos(\theta) 2^j t_0} \lesssim M_l. \]
Consider now $j > l,$ so that for $K_j,$ the estimate in case a) applies.
The cases Iai) - Iav) cannot appear due to restrictions on $j,l$ and $\cos(\theta)t_0/t$ similar as before.
In case IIa), we have $K_j \lesssim \frac{(\cos(\theta))^{-\beta}}{\mu(B(x,2^j t_0))} \frac{t}{\cos(\theta) 2^j t_0} \lesssim  \frac{(\cos(\theta))^{-\beta}}{\mu(B(x,2^l t_0))} \lesssim M_l.$
Thus $\sup_{j: 2^j t_0 \geq t} K_j \lesssim M_l$ if $M_l$ is given by case IId).
We have proved part 2. of the lemma.\\

Now for the proof of part 3. of the lemma.
We proceed similarly as before.
The expression $M_l$ is given by one of the six cases Iai) - Iav) or IIa).
Suppose first that case Iai) applies.
Consider $j < l.$
Then $R \geq 2,$ so that for $K_j,$ case e) applies.
In case Ie), we have 
\begin{align*}
K_j & \lesssim (\cos(\theta))^{-\beta} (2^j t_0)^{-d} \frac{(2^l t_0)^{-(d+1)}}{(2^j t_0)^{-(d+1)}} \left( 1 + \log \frac{t}{2^j t_0 \cos(\theta)} \right) \\
& \lesssim (\cos(\theta))^{-\beta} (2^l t_0)^{-d} ( 1 + |\log(\cos(\theta))| ).
\end{align*}
Case IIe) cannot appear.
Now consider $j > l.$
Then for $K_j,$ case a) applies.
As several times before, diverse terms in $R$ are equivalent to 1.
In cases Iai), Iaii) and Iaiii), we have $K_j \lesssim \frac{(\cos(\theta))^{-\beta}}{\mu(B(x,2^j t_0))} (1 + |\log(\cos(\theta))| ) \leq \frac{(\cos(\theta))^{-\beta}}{\mu(B(x,2^l t_0))} (1 + |\log(\cos(\theta))| ).$
In cases Iaiv) and Iav), we have $K_j \lesssim \frac{(\cos(\theta))^{-\beta}}{\mu(B(x,2^j t_0))} \lesssim M_l,$ and in case IIa),
we have $K_j \lesssim \frac{(\cos(\theta))^{-\beta}}{\mu(B(x,2^j t_0))} \frac{t}{\cos(\theta) 2^j t_0} \lesssim \frac{(\cos(\theta))^{-\beta}}{\mu(B(x,2^l t_0))}.$
Thus, if $M_l$ is given by Iai), we have $\sup_{j: 2^j t_0 \geq t} K_j \lesssim M_l + \frac{(\cos(\theta))^{-\beta}}{\mu(B(x,2^l t_0))} ( 1 + |\log(\cos(\theta))| ).$\\

Suppose that $M_l$ is given by case Iaii).
Consider a $j < l.$
Then $K_j$ is estimated by case Ie), and we have $K_j \lesssim (\cos(\theta))^{-\beta} (2^l t_0)^{-d} ( 1 + |\log(\cos(\theta))| ).$
Consider a $j > l.$
Then $K_j$ is estimated by case a), and we have $K_j \lesssim \frac{(\cos(\theta))^{-\beta}}{\mu(B(x,2^l t_0))} ( 1 + | \log(\cos(\theta)) | ).$
Thus, if $M_l$ is given by Iaii), we have again $\sup_{j: 2^j t_0 \geq t} K_j \lesssim M_l + \frac{(\cos(\theta))^{-\beta}}{\mu(B(x,2^l t_0))} ( 1 + |\log(\cos(\theta))| ).$\\

Similarly, we obtain in the cases that $M_l$ is given by Iaiii), Iaiv) or Iav) that $\sup_{j: 2^j t_0 \geq t} K_j \lesssim M_l + \frac{(\cos(\theta))^{-\beta}}{\mu(B(x,2^l t_0))} ( 1 + |\log(\cos(\theta))| ).$\\

If $M_l$ is given by IIa), we have $M_l = \frac{(\cos(\theta))^{-\beta}}{\mu(B(x,2^l t_0))} \frac{t}{\cos(\theta) 2^l t_0} \left( 1 - \frac{\rho(x,y)^2}{(2^l t_0)^2} \right)^{-\frac{d+1}{2}}.$
Consider a $j < l.$
Then $R \geq 2,$ so that case Ie) or IIe) applies for $K_j.$
In case Ie), we have
\begin{align*}
K_j & \lesssim (\cos(\theta))^{-\beta} (2^j t_0)^{-d} \frac{(2^j t_0)^{d+1}}{(2^l t_0)^{d+1}} \left( 1 + \log \frac{t}{2^j t_0 \cos(\theta)} \right) \\
& \lesssim (\cos(\theta))^{-\beta} (2^l t_0)^{-d} 2^{j-l} \left( 1 + \log \frac{t}{2^j t_0 \cos(\theta)} \right) \lesssim M_l.
\end{align*}
In case IIe), we have $K_j \lesssim (2^l t_0)^{-d} 2^{j-l} \frac{t}{\cos(\theta) 2^j t_0} \lesssim M_l.$
Consider now a $j > l.$
Then for $K_j,$ case a) applies.
Cases Iai) - Iav) cannot occur due to restrictions on $j,l$ and $\cos(\theta) t_0/t$ as several times before.
In case IIa), we have $K_j \lesssim \frac{(\cos(\theta))^{-\beta}}{\mu(B(x,2^j t_0))} \frac{t}{\cos(\theta) 2^j t_0} \lesssim  \frac{(\cos(\theta))^{-\beta}}{\mu(B(x,2^l t_0))} \frac{t}{\cos(\theta) 2^l t_0} \lesssim M_l.$
Thus if $M_l$ is given by IIa), we have $\sup_{j: 2^j t_0 \geq t} K_j \lesssim M_l.$
\end{proof}

\section*{Acknowledgement}
The author would like to thank the harmonic analysis group of the Australian National University in Canberra for their kind hospitality, in particular Thierry Coulhon, Dorothee Frey, Pierre Portal and Adam Sikora.
During the stay at Canberra the main theorem could be improved.

\end{document}